\newtheorem{thm}{Theorem}[section]
\newtheorem{cor}[thm]{Corollary}
\newtheorem{lem}[thm]{Lemma}
\newtheorem{prop}[thm]{Proposition}
\theoremstyle{definition}
\newtheorem{defn}[thm]{Definition}
\newtheorem{rem}[thm]{Remark}
\newtheorem{ex}[thm]{Example}
\newcommand{\hK}{hyperK\"{a}hler }
\newcommand{\wrt}{with respect to }
\newcommand{\Kvf}{{Killing vector field }}
\newcommand{\kahler}{{K{\"a}hler }}
\newcommand{\TP}{{Taubes-Pid\-stry\-gach }}
\newcommand{\compcent}[1]{\vcenter{\hbox{$#1\circ$}}}
\newcommand{\comp}{\mathbin{\mathchoice {\compcent\scriptstyle}{\compcent\scriptstyle}
{\compcent\scriptscriptstyle}{\compcent\scriptscriptstyle}}}
\newcommand{\set}[1]{\left\{#1\right\}}
\newcommand{\e}{\varepsilon}
\newcommand{\bM}{{\mathbb M}}
\newcommand{\BundleLieAlgG}{{\mathbb L}}
\newcommand{\om}{{\omega}}
\newcommand{\Om}{{\Omega}}
\renewcommand{\c}[1]{{#1}_{\ssst{\mathbb C}}}
\renewcommand{\hom}[3]{ {Hom_{\mathbb{#1}} \left ( {#2} , {#3}\right )} }
\renewcommand{\Im}{{\text{\rmfamily\upshape Im} \,}}
\renewcommand{\Re}{{\text{\rmfamily\upshape Re} \,}}
\newcommand{\ImH}{{\Im\mathbb H}}
\newcommand{\hkred}{{/\!\! /\!\! /}}
\newcommand{\duzhky}[1]{{\left ( #1\right )}}
\newcommand{\ssst}[1]{{\scriptscriptstyle #1}}
\DeclareMathOperator{\TOmega}{T_{\Omega}}
\DeclareMathOperator{\dirac}{{\mathcal D}}
\newcommand{\overlineshort}[1]{\ensuremath{\hspace{1pt}\overline{\hspace{-1pt}#1\hspace{-1pt}}}\hspace{1pt}} 
\begin{document}

\title{Gauge theory, calibrated geometry and harmonic spinors}%
\author{Andriy Haydys\\%
\textit{University of Bielefeld}}%



\date{January 14, 2011}
\maketitle
\begin{abstract}
In this paper connections between different gauge-theoretical problems in high and low dimensions are established. In particular it is shown that higher dimensional anti-self-duality equations on total spaces of spinor bundles over low dimensional manifolds can be interpreted as Taubes-Pidstrygach's generalization of the Seiberg-Witten equations. By collapsing each fibre of the spinor bundle to a point, solutions of the Taubes-Pidstrygach equations are related to generalized harmonic spinors.  This approach is also generalized for arbitrary fibrations (without singular fibres) compatible with an appropriate calibration.
\end{abstract}

\section{Introduction}

The basic set-up of low dimensional gauge theory have been generalized to higher dimensions by Donaldson and  Thomas~\cite{DonaldsonThomas:98}. Such generalization requires a suitable geometric structure on the base higher dimensional manifold, e.g., a metric with holonomy $G_2,\ Spin(7)$ or $SU(n)$. In this paper we consider gauge theory based on the  $Spin(7)$-structure mainly because of applications we keep in mind, but we could equally well start with $G_2$-- or $SU(n)$--structures.

A Riemannian eight-manifold $W$ with holonomy $Spin(7)$ is endowed with a particular closed 4-form $\Om$ called Cayley form, which is a calibration~\cite{HarveyLawson:82}. Vice versa, a Cayley calibration determines a metric with holonomy $Spin(7)$. The 4-form $\Om$ induces the splitting of $\Lambda^2T^*W$ into two subbundles $\Lambda^2_+T^*W$ and  $\Lambda^2_-T^*W$ of rank $7$ and $21$ respectively. We say that a connection $A$ is anti-self-dual (or that $A$ is a $Spin(7)$-instanton) if the self-dual part of the curvature $F_A^+$ vanishes.

%

A particularly important role in our approach is played by eight-manifolds with a structure of a Cayley fibration, i.e. a map $\rho\colon W^8\rightarrow X^4$ such that each fibre of $\rho$ is an $\Om$-calibrated submanifold of $W$. The first nontrivial example of a Cayley fibration has been constructed in~\cite{BryantSalamon:89}. In the construction of Bryant and Salamon $W$ is the total space of the spinor bundle over the sphere $S^4=X$ and $\rho$ is the natural projection.  If $X$ is an arbitrary spin manifold, then on the total space of the spinor bundle the Cayley 4-form $\Om$ still exists, but it is not closed in general. Nevertheless, the asd condition still makes sense and gives rise to an elliptic problem.

One motivation to study the equations on the total spaces of spinor bundles over general four-manifolds is as follows. Recall that the key ingredients in Tian's construction of the compactified moduli space of higher dimensional instantons are (possibly singular) calibrated submanifolds. Assume $X^4\hookrightarrow W^8$ is a smooth calibrated submanifold (Cayley submanifold). Then, by the result of McLean~\cite{McLean:98}, the normal bundle of $X$ is isomorphic to the (twisted) spinor bundle of $X$. Therefore we hope that our computation will be useful in a detailed study of the boundary of the compactified moduli space. We refer to~\cite[Section~6]{DonaldsonSegal:09} for more details on this issue.

Another motivation comes from low dimensional topology. Suppose we are granted a construction that associates a $Spin(7)$-manifold $W^8_X$ to each smooth four-manifold  $X$ (possibly equipped with an additional structure). Then, by counting $Spin(7)$-instantons on $W_X$ we should get an invariant of $X$. For instance, we can associate to each spin four-manifold the total space of its spinor bundle as mentioned above. Then  Theorem~\ref{Thm_ASDarePSW} (the main result of this paper) essentially states that counting instantons on the spinor bundle of $X$ is equivalent to counting Taubes-Pidstrygach monopoles~\cite{Taubes:99,Pidstrygach:04} on $X$.


There are a few problems with the above approach. One is that the Cayley form $\Om$ on $W_X$ is not always closed as already mentioned above. However, the theory of the asd equations can still be developed if the closedness condition of $\Om$ is suitably weakened, see for example~\cite[Thm.\,6.1.3]{Tian:00} and~\cite[Sect.3]{DonaldsonSegal:09}.  In any case, the closedness of $\Om$ is \emph{not} essential for our computations.

Another problem is as follows. The main idea of the Taubes-Pidstrygach generalization of the Seiberg-Witten equations is to replace the fibre $\mathbb C^2$ of the spinor bundle with an arbitrary \hK manifold $M$ equipped with suitable symmetries. In our case $M$ happens to be infinite-dimensional. There is however some evidence that the resulting low-dimensional gauge theory can be phrased in terms of finite-dimensional target spaces. This issue is briefly discussed in Section~\ref{Sect_ConcludingRemarks}.

\medskip

The paper is organized as follows. In introductory Section~\ref{Sect_ToyModel} main ideas in the simplest case of the flat base space are briefly sketched. In Section~\ref{Sect_Preliminaries} some basic definitions are recalled and the construction of the Bryant-Salamon precalibration~\cite{BryantSalamon:89} on the total space of the spinor bundle $\mathbb W^+$ over a four-manifold $X$ is reviewed.

Section~\ref{Sect_ASDOnEightMnflds} is the core of the paper. In subsection~\ref{Subsec_PerturbedTPeqns} we study formal aspects of the adiabatic limit for \TP monopoles. In subsection~\ref{Subsect_ASDasPSW} we show that asd equations on $\mathbb W^+$ can be interpreted as \TP equations for a suitable choice of the target space $M$ (Theorem~\ref{Thm_ASDarePSW}). By collapsing each fibre of $\mathbb W^+$ to a point we show that the moduli space of $Spin(7)$-instantons corresponds to the space of generalized harmonic spinors~\cite{Haydys_ahol:08}, whose target space is the moduli space of framed four-dimensional instantons (Corollary~\ref{Cor_InstantonsAndHarmSpinors}). This is a variant of the adiabatic limit reduction outlined in~\cite{DonaldsonThomas:98}. It is worth to note that  Corollary~\ref{Cor_InstantonsAndHarmSpinors} can be proven without the \TP construction, but in the authors opinion this statement is best understood from this more abstract point of view at least if one is interested in applications to low dimensional topology.

We do not discuss analytic aspects of the adiabatic limit in this paper for several reasons. The major reason is that a prerequisite for the analytic part of the  proof is a completion of the space of nonlinear harmonic spinors, which is yet to be constructed. This problem also arises in the recent paper~\cite{Salamon_hKFloer:08}. The case of the Cartesian product of two \hK four-manifolds have been studied in~\cite{Chen:99}.


In Section~\ref{Sect_InstantonsAndCayleyFibr} we show how our previous results  modify in the case of an arbitrary Cayley fibration without singular fibres.


\section{A toy model}\label{Sect_ToyModel}

In this section basic ideas in the simplest case of the flat space are outlined. More details on computations are given in the subsequent sections.

Think of the flat space $\mathbb R^8$ equipped with a translation-invariant Cayley 4-form $\Om$ (see~\eqref{Eq_Standard4Form}) as the spinor bundle of the flat four-manifold $X=\mathbb R^4:\ \mathbb R^8=X\times W^+$.
%
A connection $A$ invariant \wrt the $W^+$-directions on the trivial $G$-bundle consists of a connection $a$ on $\underline G\rightarrow X$ and a Higgs field $\Phi$, which is a section of the trivial bundle with fibres $W^+\otimes\mathfrak g_{\mathbb C}$, where $\mathfrak g=Lie(G)$. Donaldson and Thomas~\cite{DonaldsonThomas:98} observe that $A$ is a $Spin(7)$-instanton iff the following equations hold%
\begin{equation}\label{Eq_4DHitchinEqns}
       \dirac_a\Phi=0,\quad
       F_a^+=[\Phi, \Phi^*].
\end{equation}
Here $\dirac_a$ is the Dirac operator on $X$ coupled to the connection $a$, the bracket in the second equation is a combination of the Lie-bracket and the map $W^+\otimes\overlineshort W^+\rightarrow \Lambda^2_+T^*X$. It is worth pointing out the striking similarity between equations~(\ref{Eq_4DHitchinEqns}) and the renowned Seiberg-Witten equations.

%

It turns out that the above observation fits into a much wider picture. Namely, let $V$ denote the tangent space of $X$ at a fixed point (the origin, say). Observe that both $V$ and $W^+$ are equipped with the quaternionic structures and therefore we can naturally identify both $\Lambda^2_+ V^*$ and $\Lambda^2_+ (W^+)^*$ with $\mathbb R^3$. Decompose a 2-form $\om$ on $X\times W^+$ into its K\"unneth-type components, $\om=\om_{2,0}+\om_{1,1}+\om_{0,2}$, and think of $\om_{p,q}$ as a function on $\mathbb R^8$ with values in $\Lambda^p V^*\otimes\Lambda^q(W^+)^*$. Then a computation shows that $\om^+=\frac 14(\om-*(\Om\wedge\om))$ vanishes iff
\begin{equation}\label{Eq_ASDinComponents}
       Cl(\om_{1,1})=0\quad\text{and}\quad
       \om_{2,0}^+=\om_{0,2}^+.
\end{equation}
Here $Cl\colon V^*\otimes (W^+)^*\cong V\otimes W^+\rightarrow W^-$ is the standard Clifford multiplication in dimension four, $\om_{2,0}^+\in C^\infty(\mathbb R^8; \Lambda^2_+V^*)\cong C^\infty(\mathbb R^8; \mathbb R^3)$, and $\om_{0,2}^+$ is interpreted similarly.

Likewise, decompose a connection $A$ on the trivial $G$-bundle 
into its K\"unneth-type components: %
\begin{equation*}
 A=a+b, \quad a\in C^\infty\duzhky{\mathbb R^8;\, (W^+)^*\otimes\mathfrak g},\  b\in C^\infty\duzhky{\mathbb R^8;\, V^*\otimes\mathfrak g}.
\end{equation*}
 Then  it follows from~(\ref{Eq_ASDinComponents})  that $A$ is a $Spin(7)$-instanton iff the following equations hold%
\begin{equation}\label{Eq_ToySpin7Instantons}
       Cl\bigl (d_X a +d_{W^+}b+ [b,a]\bigr )=0,\qquad
       F_b^+=F_a^+,
\end{equation}
where $a$ and $b$ can be thought of as families of connections on trivial $G$-bundles over $W^+$ and $X$ respectively.

\medskip

Putting away $Spin(7)$-instantons for a while, we describe the generalization of the Seiberg-Witten equations due to Taubes~\cite{Taubes:99} and Pidstrygach~\cite{Pidstrygach:04} in a special case. Namely, for a quaternion Hermitian vector space $E$ assume an action of a Lie group $\mathcal G$ preserving its quaternion Hermitian structure is given. Denote by $\mu\colon E\to Lie(\mathcal G)\otimes\mathbb R^3$ the corresponding momentum map. Recall that with suitable identifications the Clifford multiplication in dimension four is the map $\mathbb H\otimes_{\mathbb R}\mathbb H\rightarrow\mathbb H,\ x\otimes y\mapsto \bar xy$. Hence, identifying $V^*$ with $\mathbb H$ we get a variant of the Clifford multiplication $Cl\colon V^*\otimes E\to E$. Let $b$ be a connection on the trivial $\mathcal G$-bundle, i.e. $b\in\Om^1(X;Lie (\mathcal G))$, and $u\in C^\infty(X; E)$ be a spinor. The following equations for $(b,u)$%
\begin{equation*}
       \dirac_b u=Cl(\nabla^b u)=0,\quad
       F_b^++\mu (u)=0,
\end{equation*}
are called Taubes-Pidstrygach equations. 

For a real parameter $\e$ consider the following perturbation 
\begin{equation}\label{Eq_ToyPertTP}
       \dirac_{b_\e} u_\e=Cl(\nabla^{b_\e} u_\e)=0,\quad
       \e F_{b_\e}^++\mu (u_\e)=0.
\end{equation}
Let $(u_0, b_0)$ be a solution for $\e=0$. Assuming that the hyperK\"ahler reduction $E\hkred\mathcal G=\mu^{-1}(0)/\mathcal G$ is smooth, a little thought shows that the map $v_0\colon \mathbb R^4\xrightarrow {\ u_0\ }\mu^{-1}(0)\rightarrow E\hkred\mathcal G$ satisfies the Fueter equation%
\begin{equation*}
 Cl(d_Xv_0)=\frac {\partial v_0}{\partial x_0}- I_1\frac {\partial v_0}{\partial x_1}- I_2\frac {\partial v_0}{\partial x_2}- I_3\frac {\partial v_0}{\partial x_3}=0. 
\end{equation*}
Here $I_j$ are complex structures on $E\hkred\mathcal G$ and $x_j$ are coordinates on $\mathbb R^4$.


Consider now the special case when $E$ consists of all $a\in \Om^1(W^+;\mathfrak g)$ satisfying a suitable asymptotic condition at infinity. Here $\Om^1(W^+;\mathfrak g)$ is equipped with the $L_2$-scalar product and the complex structures are induced from $W^+$. Let $\mathcal G^0$ denote the group of all gauge transformations based at infinity. Then the action of  $\mathcal G^0$ is compatible with the quaternion Hermitian structure and the moment map is $\mu (a)=F_a^+$. A straightforward computation shows that for our choice of $E$ the Taubes-Pidstrygach equations for a pair $(b,u)=(b,a)$ are exactly equations~(\ref{Eq_ToySpin7Instantons}). Moreover, the hyperK\"ahler reduction $E\hkred\mathcal G^0$ is the moduli space of \emph{framed} asd connections $\mathcal M_{asd}^0$, which is smooth. Then the perturbation of the form~\eqref{Eq_ToyPertTP} corresponds to a scaling of the metric on $W^+$ and leads to the Fueter maps from $\mathbb R^4$ to $\mathcal M_{asd}^0$.


\section{Preliminaries}\label{Sect_Preliminaries}

\subsection{Differential forms on fibre bundles}\label{Subsect_DiffFormsOnFibreBundles} 
Let $X$ be a manifold and $H$ be a Lie group. Let $M$ be another manifold endowed with an action of the group $H$. Pick a principal $H$-bundle $\pi:Q\rightarrow X$ equipped with a connection $\varphi$ and denote by $\bM \xrightarrow{\ \rho \ }X$ the associated fibre bundle: $\bM= Q\times_H M$.
The connection $\varphi$ determines a splitting $\mathrm T\bM=\mathcal H_{\ssst \bM}\oplus\mathcal V_{\ssst \bM}$ into horizontal and vertical subbundles and therefore the space of differential forms om $\mathbb M$ is bigraded with $\Om^{p,q}(\bM)=
\Gamma\duzhky{\Lambda^p\mathcal H_{\ssst\bM}^* \otimes \Lambda^q\mathcal V_{\ssst\bM}^* }$.


In the sequel, we will often make use of a construction called a ``change of fibre". We illustrate this with the following example.  The infinite dimensional graded vector space $\Om(M)$ inherits an action of $H$ and therefore we have the associated vector bundle $\mathcal E\rightarrow X$ of infinite rank: $\mathcal E=Q\times_H\Om(M)$. One can think of  $\mathcal E$ as the fibre bundle obtained by replacing each fibre $\mathbb M_x\cong M$ by $\Om(\mathbb M_x)$.

The connection $\varphi$ induces the covariant derivative $\nabla^\varphi\colon \Gamma(\mathcal E)\rightarrow \Om^1(\mathcal E)$, which extends to the map $d_\varphi\colon \Om^p(\mathcal E)\rightarrow\Om^{p+1}(\mathcal E)$. Identifying $\Om^p(\mathcal E^q)$ with $\Om^{p,q}(\mathbb M)$ we see that $d_\varphi\colon \Om(\mathbb M)\rightarrow \Om(\mathbb M)$ is a homomorphism of bidegree $(1,0)$. On the other hand, the exterior derivative $d\colon\Om(M)\rightarrow\Om(M)$ is $H$-invariant and therefore induces a homomorphism %
$d_v\colon \Om(\mathbb M)\rightarrow \Om(\mathbb M)$ of bidegree $(0,1)$. Unlike in the case of the Cartesian product, the exterior derivative on $\mathbb M$ has one more component, which we describe next. 

Let $K_\xi$ denote the \Kvf of the $H$-action on $M$ corresponding to $\xi\in\mathfrak h=Lie(H)$. The contraction 
$\mathfrak h\otimes\Om(M) \rightarrow \Om(M),\ \xi\otimes\om\mapsto \imath_{K_\xi}\om $
defines a homomorphism of vector bundles $ad\, Q\otimes\mathcal E\rightarrow\mathcal E$. Then the
curvature form $\Phi$ of the connection $\varphi$ induces the map $\imath_\Phi\colon \Om^p(\mathcal E^q)\rightarrow \Om^{p+2}(\mathcal E^{q-1})$ via a combination of wedging and contraction.

\begin{thm}[\cite{BismutLott:95}]\label{Thm_ComponentsOfDifferential}
The exterior derivative $d_{\ssst\bM}\colon\Om(\bM)\rightarrow\Om(\bM)$ decomposes as
follows: %
$  d_{\ssst\bM}=d_v+ d_{\varphi} -\imath_\Phi.$
\end{thm}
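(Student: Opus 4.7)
My plan is to compute $d_{\ssst\bM}$ directly via Cartan's formula and to decompose it according to the bigrading induced by $T\bM=\mathcal{H}_{\ssst\bM}\oplus\mathcal{V}_{\ssst\bM}$. Since $d_{\ssst\bM}$ is a first-order differential operator of total degree $+1$, its bigraded components can a priori have bidegrees $(1,0)$, $(0,1)$, $(2,-1)$, and $(-1,2)$ only. Applying Cartan's formula
\[d\om(X_0,\dots,X_n)=\sum_i(-1)^i X_i\om(\dots \widehat{X_i}\dots )+\sum_{i<j}(-1)^{i+j}\om([X_i,X_j],\dots)\]
to tuples of vector fields of prescribed horizontal/vertical type lets me read off each bigraded component separately.

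A direct bookkeeping rules out the $(-1,2)$ piece: for $\om$ of bidegree $(p,q)$ evaluated on $p-1$ horizontal lifts and $q+2$ vertical vectors, no combination of a derivation term (after one removal) or of a bracket term (using that $[V,V]$ and $[\tilde u,V]$ are vertical and that $[\tilde u,\tilde v]$ has horizontal and curvature parts) produces the required $(p,q)$ split in the remaining arguments. For the $(0,1)$ component, the surviving terms are derivations along vertical directions together with $[V,V]$ brackets, which together constitute the fibrewise exterior derivative, i.e.\ $d_v$ under the identification $\Om^p(\mathcal{E}^q)\cong\Om^{p,q}(\bM)$. For the $(1,0)$ component, the surviving terms are derivations along horizontal lifts, the horizontal part of $[\tilde u,\tilde v]$, and $[\tilde u,V]$ brackets; these translate, via the same identification, to the covariant derivative $\nabla^\varphi$ extended as a graded derivation, i.e.\ $d_\varphi$.

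The nontrivial piece is the $(2,-1)$ component. Evaluating on $p+2$ horizontal lifts and $q-1$ vertical vectors kills every derivation term as well as every $[V,V]$ and $[\tilde u,V]$ contribution, and the only surviving bracket is $[\tilde u,\tilde v]$ whose vertical part is, by the defining property of the curvature, $-K_{\Phi(u,v)}$, where $K_\xi$ denotes the fundamental vector field on $\bM$ induced by $\xi\in\mathfrak h$ via the $H$-action on $M$. Substituting this into Cartan's formula and identifying the resulting antisymmetric sum over pairs with ``wedge then contract'' produces exactly $-\imath_\Phi\om$.

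The main technical obstacle will be the careful passage between the ``total space'' picture $\Om(\bM)$ and the ``associated bundle'' picture $\Om^\bullet(X,\mathcal{E})$, since $d_v$, $d_\varphi$, and $\imath_\Phi$ are most naturally defined on the latter while $d_{\ssst\bM}$ lives on the former. Establishing the vertical-bracket identity $[\tilde u,\tilde v]_{\ssst\mathcal{V}}=-K_{\Phi(u,v)}$ directly on $\bM$ (not merely on $Q$) and tracking sign and normalization conventions through the change-of-fibre construction is where the bulk of the bookkeeping lies; everything else reduces to Cartan's formula and degree counting.
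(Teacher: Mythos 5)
The paper does not reproduce a proof of this theorem; it is stated with a citation to Bismut--Lott, so there is no internal argument to compare against. Your Cartan-formula argument is the standard direct proof of this decomposition and appears sound. The structural facts you isolate are all correct: that $[V,W]$ and $[\tilde u, V]$ are vertical while $[\tilde u,\tilde v]$ splits into horizontal and vertical parts is what forces the absence of a $(-1,2)$ component; the $(0,1)$ and $(1,0)$ contributions reassemble, under the identification $\Omega^p(\mathcal E^q)\cong\Omega^{p,q}(\mathbb M)$, into the fibrewise differential $d_v$ and the covariant differential $d_\varphi$ (and you are right that the $[\tilde u, V]$ bracket terms belong to the $(1,0)$ piece alongside the horizontal derivations and the horizontal part of $[\tilde u,\tilde v]$); and the only $(2,-1)$ contribution comes from the vertical part of $[\tilde u,\tilde v]$. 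On the sign you rightly flag as delicate: on $Q$ the structure equation gives $\varphi([\hat u,\hat v])=-\Phi(\hat u,\hat v)$ for horizontal $\hat u,\hat v$, so the vertical part of $[\hat u,\hat v]$ is the fundamental field of $-\Phi(\hat u,\hat v)$; passing to the diagonal quotient $\mathbb M=(Q\times M)/H$ identifies a fundamental field on the $Q$-side with minus the Killing field on the $M$-side, and the two sign flips yield exactly $[\tilde u,\tilde v]_{\mathcal V}=-K_{\Phi(u,v)}$ as you claim, whence the minus sign in $-\imath_\Phi$. With those checks written out, the argument is complete.
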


Let $P\rightarrow M$ be a principal $G$--bundle. We assume that a lift of the 
$H$-action to $P$ is provided such that the actions of $G$ and $H$ commute.
Then the associated bundle $\mathbb P =Q\times_H P$ yields a principal
$G$-bundle over $\mathbb M$:
\begin{displaymath}
 \begin{CD}
 Q\times P   @> > >  \mathbb P \\
 @VVV                @VVV\\
 Q\times M @> > >  \ \mathbb M.
 \end{CD}
\end{displaymath}

Denote by $\mathcal E^0(ad\, P)$ (respectively $\mathbb A$) the fibre bundle obtained by replacing each fibre $\mathbb M_x$ by $\Om^0(\mathbb M_x;\, ad\, \mathbb P)$ (respectively $\mathcal A(i^*_x\mathbb P)$), where $i_x\colon \mathbb M_x\hookrightarrow \mathbb M$ is the inclusion. The connection $\varphi$ determines the covariant derivatives on both $\mathcal E^0(ad\, P)$ and $\mathbb A$ as well as the inclusions $\hat\cdot\colon \Om^1 (\mathcal E^0(ad\, P))\hookrightarrow \Om^1(\mathbb M; ad\,\mathbb P)$ and $\hat\cdot\colon \Gamma(\mathbb A)\hookrightarrow\mathcal A(\mathbb P)$. The letter inclusion is best seen by thinking of connections as 1-forms on the corresponding principal bundles.  For any $a\in\Gamma(\mathbb A), b\in \Om^1(\mathcal E^0(ad\, P))$ the sum $A=\hat a+\hat b$ is a connection on $\mathbb P$. Vice versa, any connection $A$ on $\mathbb P$ can be decomposed as $\hat a+\hat b$ for some $a,b$ as above.

The proof of the following Proposition can be obtained, for instance, by a
straightforward application of Theorem~\ref{Thm_ComponentsOfDifferential} to the local
representations of connection forms. We omit the details. 

\begin{prop}\label{Prop_CompOfCurvature}
 For a connection $A=\hat a+\hat b$ the components of the curvature $ F_A\in\Om^2(\mathbb M; ad\,\mathbb P)$ are given by the following formulae:
 \begin{align}
    F_{\ssst A}^{0,2} &= F_a;\label{Eq_CurvatureComponent02}\\%
    F_{\ssst A}^{1,1} &= \nabla^\varphi a
       +\nabla^a b; \label{Eq_CurvatureComponent11}\\%
    F_{\ssst A}^{2,0} &= -\imath_{\ssst\Phi}a + d_\varphi b+
       [b,b].\label{Eq_CurvatureComponent20}
  \end{align}
\end{prop}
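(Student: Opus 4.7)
The plan is to unpack the Cartan structure equation $F_A = dA + \tfrac{1}{2}[A,A]$ in bidegrees via the splitting $T\mathbb M = \mathcal H_{\mathbb M}\oplus \mathcal V_{\mathbb M}$ induced by $\varphi$, and then recognize each piece as one of the listed geometric objects.

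First I would fix a local section of $\mathbb P \to \mathbb M$ so that $A$ is a $\mathfrak g$-valued 1-form on an open set. By definition of $\hat a$ and $\hat b$, the form $\hat a$ is vertical, i.e.\ has bidegree $(0,1)$, while $\hat b$ is horizontal, i.e.\ has bidegree $(1,0)$. I would then expand
\begin{equation*}
F_A = d_{\ssst\mathbb M}(\hat a+\hat b) + \tfrac12[\hat a+\hat b,\hat a+\hat b],
\end{equation*}
apply Theorem~\ref{Thm_ComponentsOfDifferential} to rewrite $d_{\ssst\mathbb M} = d_v + d_\varphi - \imath_\Phi$, and read off terms by their bidegrees $(0,1)$, $(1,0)$ and $(2,-1)$ respectively. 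Note that $\imath_\Phi\hat b = 0$ because $\hat b$ has no vertical slot, and the cross bracket $[\hat a,\hat b]$ has bidegree $(1,1)$.

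Sorting by bidegree, one finds:
\begin{itemize}
\item $(0,2)$: $F_A^{0,2} = d_v\hat a + \tfrac12[\hat a,\hat a]$. Restricted to a single fibre $\mathbb M_x$, the operator $d_v$ reduces to the exterior derivative on $\mathbb M_x$ and $\hat a|_{\mathbb M_x}$ is precisely the connection $a(x) \in \mathcal A(i_x^*\mathbb P)$, so this is the fibrewise curvature $F_a$.
\item $(1,1)$: $F_A^{1,1} = d_v\hat b + [\hat a,\hat b] + d_\varphi\hat a$. The first two terms are the covariant derivative of $b\in\Om^1(\mathcal E^0(ad\,P))$ along the fibres with respect to the fibrewise connection $a$, hence equal $\nabla^a b$; the third term is the horizontal covariant derivative of $a\in\Gamma(\mathbb A)$ induced by $\varphi$, that is, $\nabla^\varphi a$.
\item $(2,0)$: $F_A^{2,0} = d_\varphi\hat b + \tfrac12[\hat b,\hat b] - \imath_\Phi\hat a$. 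The first two terms combine to $d_\varphi b + [b,b]$ after unpacking the sign/factor conventions in the bracket of $\mathfrak g$-valued 1-forms, and the last is $-\imath_\Phi a$ in the sense of the contraction $ad\,Q\otimes\mathcal E\to\mathcal E$.
\end{itemize}

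The main obstacle is purely bookkeeping: one has to verify carefully that the two intrinsic operators $\nabla^\varphi$ and $\nabla^a$ introduced on the bundles $\mathbb A$ and $\mathcal E^0(ad\,P)$ really coincide, after the inclusions $\hat\cdot$, with the explicit expressions $d_\varphi\hat a$ and $d_v\hat b + [\hat a,\hat b]$ produced by the Bismut--Lott decomposition. This is most cleanly done in a local trivialization of $Q$ in which $\varphi$ is represented by a $\mathfrak h$-valued 1-form $\varphi_{\mathrm{loc}}$; the horizontal lift then inserts $-K_{\varphi_{\mathrm{loc}}(\cdot)}$ into the vertical slot, and a direct check recovers exactly the formulas above. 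Once this identification is made, the three curvature components drop out of the bidegree decomposition with no further computation.
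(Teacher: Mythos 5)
Your proof is correct and follows essentially the same route the paper intends: the paper says the proposition is obtained ``by a straightforward application of Theorem~\ref{Thm_ComponentsOfDifferential} to the local representations of connection forms'' and omits the details, which is precisely the bidegree bookkeeping you carry out. The only cosmetic difference is that the paper keeps the details implicit, whereas you spell out the sorting by bidegree and the identification of $d_v\hat b + [\hat a,\hat b]$ with $\nabla^a b$ and of $d_\varphi\hat a$ with $\nabla^\varphi a$.
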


In formulae~\eqref{Eq_CurvatureComponent11},\eqref{Eq_CurvatureComponent20} the following notations are used.  First notice that each fibre of the bundle $\mathcal E^0(ad\, P)$ is naturally a Lie algebra. Then the term $[b,b]$ in~(\ref{Eq_CurvatureComponent20}) means a
combination of the Lie brackets and wedging. Further, for each fixed $x\in X$ the value of $a$ at $x$  gives a connection $\nabla^{a(x)}$ on $\Om^0(i_x^*ad\,\mathbb P)$. On the other hand, the value of $b$ at $x$ lies in $\Om^0(i_x^*ad\,\mathbb P)\otimes T_x^*X$ and therefore the
(vertical) covariant derivative $\nabla^{a(x)}b(x)$ is well defined. It is abbreviated as $\nabla^ab$ in~(\ref{Eq_CurvatureComponent11}).


\subsection{The group Spin(7), some subgroups and representations}\label{Subsect_GroupSpin7} 
Denote by $\mathbb H$ the $\mathbb R$-algebra of quaternions and by $Sp(1)$ the group of all quaternions of unit length. The basic
complex representation $W$ of $Sp(1)$ is given by%
\begin{equation}\label{Eq_BasicSp1Action}
  (q,x)\mapsto qx,\qquad q\in Sp(1),\  x\in\mathbb H\cong W.
\end{equation}

Consider the group $K=\bigl (Sp(1)\times Sp(1)\times Sp(1)\bigr )/\pm 1$, where $-1$ acts componentwise. It is
convenient to give a certain label to each component of $K$ as follows%
\begin{equation}\label{Eq_GpK}
  K=\bigl ( Sp_+\duzhky 1\times Sp_-(1)\times Sp_0(1)\bigr )/\pm 1.
\end{equation}
Define the action of  $K$  on $\mathbb R^8\cong \mathbb H\oplus\mathbb H$ by $[q_+, q_-, q_0]\cdot (x, y)=(q_+x\,\bar q_-,\; q_+ y\,\bar q_0)$. Denote by $U$ the corresponding representation, which is clearly the direct sum of two real irreducible $K$-representations $E$ and $F$ such that%
\begin{equation}\label{Eq_Spaces_EF}
 \c E\cong W^+\!\otimes W^-,\quad \c F\cong W^+\!\otimes W^0.
\end{equation}


Let $\theta$ (respectively $\eta$) denote the projection of $\mathbb R^8=\mathbb H\oplus\mathbb H$ onto the
first (resp. second) component. It is convenient to think of $\theta$ and $\eta$ as $\mathbb
H$-valued 1-forms on $\mathbb R^8$. The following 4-form %
\begin{equation}\label{Eq_Standard4Form}
 \Omega=-\dfrac 1{24}\mathrm{Re}\,\Bigl (
   \theta\wedge\bar\theta\wedge\theta\wedge\bar\theta -%
   6\,\theta\wedge\bar\theta\wedge \eta\wedge\bar\eta+
   \eta\wedge\bar\eta\wedge \eta\wedge\bar\eta \Bigr )
\end{equation}
is $K$-invariant. Hence  we obtain~\cite{BryantSalamon:89} $K\subset Stab_\Omega= Spin(7)\subset SO(8)$.


Think of $\mathbb R^8$  as a $Spin(7)$-representation via the inclusion $Spin(7)\subset SO(8)$. The linear map %
\begin{equation*}
  \TOmega: \Lambda^2 \bigl (\mathbb R^8\bigr )^*\rightarrow \Lambda^2\bigl (\mathbb R^8\bigr )^*, \qquad
  \omega\mapsto - *(\Omega\wedge\omega )
\end{equation*}
has two eigenvalues $3$ and $-1$. The corresponding eigenspaces $\Lambda^2_+\bigl (\mathbb R^8\bigr )^*$ and $\Lambda^2_-\bigl (\mathbb R^8\bigr )^*$ are irreducible $Spin(7)$-representations of dimension 7 and 21 respectively~\cite{Bryant:87}. %
One can check that the collection of 2-forms $(\omega_1,\dots , \omega_7)$, where %
\begin{equation}\label{Eq_BasisOfSelfdualForms}
 \begin{aligned}
   \om_1i+\om_2j+\om_3k       &= \theta\wedge\bar\theta - \eta\wedge\bar\eta,\\%
   \om_4+\om_5i+\om_6j+\om_7k &=\bar\theta\wedge\eta,
 \end{aligned}
\end{equation}
is a basis of $\Lambda_+^2\bigl (\mathbb R^8\bigr )^*$. Hence we have an isomorphism of $K$-rep\-re\-sen\-ta\-tions 
\begin{equation}\label{Eq_Lambda2PlusDecomposition}
  \Lambda^2_+U^*\cong \mathfrak{sp}_+(1)\oplus V, 
\end{equation}
where $V$ is the standard representation of $SO(4)=(Sp_-(1)\times Sp_0(1))/{\pm 1}$.

Taking into account~(\ref{Eq_Spaces_EF}) it is easy to see that there is essentially a unique homomorphism $E\otimes F\rightarrow V$. Its complexification is the four-dimensional Clifford multiplication $ Cl: W^+\!\otimes W^+\!\otimes W^- \rightarrow W^-$ twisted by $W^0$.

Denote by %
 $\Pi\colon \Lambda^2U^*\cong %
     \Lambda^2E^*\,\oplus E^*\otimes F^* \oplus\,\Lambda^2F^*\longrightarrow%
       \Lambda_+^2U^*%
$
 the natural projection. Combining the above observations we get that $\Pi$ maps $E^*\otimes F^*$ onto the $V$-component of $\Lambda_+^2U^*$ only and the composition
\begin{equation}\label{Eq_ProjPiPrime} 
\Pi'\colon E^*\otimes F^*\rightarrow \Lambda_+^2U^*\rightarrow V
\end{equation}
 is the (twisted) Clifford multiplication.

On the other hand, both $\Lambda^2_+E^*$ and $\Lambda^2_+F^*$ are naturally isomorphic to $\mathfrak{sp}_+(1)$, which in turn is the other irreducible component of $\Lambda_+^2U^*$. Then $\Pi$ maps $\Lambda^2E^*\oplus\Lambda^2F^*$ onto the $\mathfrak{sp}_+(1)$-component of $\Lambda_+^2U^*$ only and  a computation shows that the composition $\Pi''\colon \Lambda^2E^*\oplus\Lambda^2F^*\rightarrow \Lambda_+^2U^*\rightarrow \mathfrak{sp}_+(1)$ is given by%
\begin{equation*}
\Pi'' (\alpha,\beta)= \alpha^+-\beta^+,
\end{equation*}
where the above identifications are understood.

\begin{rem}\label{Rem_LimitOfProjections}
Although it is natural to take the standard Euclidean metric on
$\mathbb R^8$, one can also consider the following perturbation%
\begin{equation*}
   \begin{aligned}
        & g_\e=g_{\ssst {E}}\oplus \e g_{\ssst {F}}=\Re\bigl ( \theta\otimes\bar\theta + \e\, \eta\otimes\bar\eta  \bigr ), 
                     \qquad \\%
       &  \Om_\e=-\dfrac 1{24}\Re\Bigl (  \theta\wedge\bar\theta\wedge \theta\wedge\bar\theta - %
     6\e\,\theta\wedge\bar\theta\wedge \eta\wedge\bar\eta+ %
     \e^{2}\,\eta\wedge\bar\eta\wedge \eta\wedge\bar\eta \Bigr )
    \end{aligned}
\end{equation*}
for $\e>0$. Then by tracing the above computations we get
\begin{equation*}
 \Pi_\e=\Pi_\e'+\Pi_\e'',\qquad \Pi_\e'=\e\Pi',\quad \Pi_\e''(\alpha,\beta)=\alpha^+-\e^{-1}\beta^+.
\end{equation*}
\end{rem}


\subsection{Spin(7)-structures on spinor bundles} For an oriented Riemannian manifold $\mathbb W^8$ a $Spin(7)$-structure is a principal $Spin(7)$-subbundle of the $SO(8)$-bundle of orthonormal oriented frames or, equivalently, a $4$-form $\Omega$, whose restriction to each tangent space lies in the $SO(8)$-orbit of the standard 4-form~(\ref{Eq_Standard4Form}). In this section, following~\cite{BryantSalamon:89}, we describe  a $Spin(7)$-structure on the total space of the spinor bundle over a four-manifold.

From now on $X$ denotes a smooth closed oriented Riemannian manifold. Let %
$\tilde Q\xrightarrow{\ \tilde\pi\ } X$ be the $SO(4)$-principal bundle of oriented isometries %
$\varkappa: T_xX\rightarrow\mathbb H$. Denote by $\tilde\theta\in\Omega^1(\tilde Q;\mathbb H)$ the
tautological 1-form, i.e. $\tilde\theta (\mathrm v)=\tilde f(\tilde\pi_*\mathrm v)$, where %
$\mathrm v\in T_{\tilde f}\tilde Q$. We also assume that $X$ is spin and pick a %
$Spin(4)=Sp_+(1)\times Sp_-(1)$-structure $Q\xrightarrow{\ \pi\ }X$, which is a double cover of
$\tilde Q$. The Levi-Civita connection $(\varphi, \psi)$ is an equivariant 1-form on $Q$ with values in
$\mathfrak{sp}_+(1)\oplus\mathfrak{sp}_-(1)\cong\ImH\oplus\ImH$. Let $x$ be the quaternionic variable on $\mathbb H$. Denote $\eta=dx-\varphi x\in\Om^1(Q\times\mathbb H;\,\mathbb H)$ and put
\begin{equation*}
  \Omega=-\dfrac 1{24}\Re\Bigl (  \theta\wedge\bar\theta\wedge\theta\wedge\bar\theta -%
    6\, \theta\wedge\bar\theta\wedge\eta\wedge\bar\eta +%
    \eta\wedge\bar\eta\wedge\eta\wedge\bar\eta \Bigr )
    \in\Omega^4(Q\times\mathbb H),
\end{equation*}
where $\theta$ is the pull-back of $\tilde\theta$.

Further, define an action of $Spin(4)$ on $Q\times\mathbb H$ by the rule $(f, x)\cdot (q_+, q_-)=(f\cdot (q_+, q_-),\; \bar q_+x)$. Clearly, the quotient space is the positive spinor bundle $\rho\colon\mathbb W^+\rightarrow X$. It is easy to check that the 4-form $\Om$ is $Spin(4)$-invariant and basic. Therefore $\Om$ descends to a 4-form (denoted by the same letter) on the total space of  $\mathbb W^+$ and defines a $Spin(7)$-structure. The corresponding metric is given by $ g=\Re \bigl ( \theta\otimes\bar\theta + \eta\otimes\bar\eta \bigr )$.

\begin{rem}
 One can replace the $Spin(4)$-bundle $Q$ in the above setting by a principal $K$-bundle $Q'$ such that $Q'/Sp_0(1)=\tilde Q$. In particular, we can choose an embedding $S^1\hookrightarrow Sp_0(1)$ and take $Q'$ as a principal $Spin^c(4)=\bigl ( Sp_+(1)\times Sp_-(1)\times S^1 \bigr )/\pm 1$ bundle. Therefore we could have started with an arbitrary closed smooth oriented Riemannian four-manifold, since for such manifolds a $Spin^c(4)$-structure always exists. However, in this case one needs to make a choice of connection on the determinant bundle $Q'/Spin(4)$. 

On the other hand, choosing the $Spin(4)$-action differently one can obtain $Spin(7)$-structures on $T^*X$ or $\underline{\mathbb R}\oplus \Lambda^2_+ T^*X$. In these cases the existence of $Spin(4)$-structures is also not needed.
\end{rem}



\section{Spin(7)-instantons and Taubes-Pid\-stry\-gach equations }\label{Sect_ASDOnEightMnflds}


\subsection{\TP equations: a perturbation}\label{Subsec_PerturbedTPeqns}

 In this section we study formal aspects of a perturbation of the \TP equations.  We first sketch the \TP construction in a form suitable for our purposes. The construction involves two manifolds (\emph{a source} and \emph{a target}).
\paragraph{Source manifold.}  Let $X$ be a four-dimensional oriented Riemannian
manifold, which is referred to as a source manifold in the sequel. For the sake of simplicity we assume as
before that $X$ is spin and denote by $Q_+=Q/Sp_-(1)$ the principal bundle of $\mathbb W^+$.

Let $\mathcal G$ be a Lie group whose Lie algebra $Lie(\mathcal G)=\mathcal L$ is endowed with an
$Ad$--invariant scalar product. Assume a homomorphism %
$\alpha\colon Sp(1)\rightarrow Aut(\mathcal G)$ is given and put $\hat{\mathcal
G}=\mathcal G\rtimes Sp(1)$. Let $\hat Q$ be $\mathcal G\times Q_+$ as fibered space but considered as a principal $\hat{\mathcal G}$-bundle.


Further, the decomposition of the vector space $Lie(\hat{\mathcal G})=\hat{\mathcal L}=\mathcal L+\mathfrak{sp}(1)$ is invariant \wrt $Sp(1)\hookrightarrow\hat{\mathcal G}$. Since we have an inclusion $i\colon Q_+\hookrightarrow\hat Q$, for any connection 
$B\in\Om^1(\hat Q;\;\hat{\mathcal L})$ we obtain
\begin{equation}\label{Eq_DecompOfConnection}
   i^*B=b+\varphi,
\end{equation}
where $\varphi$ is a connection on $Q_+$ and $b$ is a 1-form on $X$ with values in $\BundleLieAlgG=Q_+\times_{Sp_+(1)}\mathcal L$. A simple computation shows that $F_B=F_b+F_\varphi$, where $F_\varphi$ is the curvature of $\varphi$ and  $F_b=d_\varphi b+ [b,b]\in\Om^2(X;\, \BundleLieAlgG)$. %
In what follows we consider only those connections $B$, whose $\mathfrak{sp}(1)$-component $\varphi$ is the Levi-Civita connection on $Q_+$.

\paragraph{Target manifold.} The other ingredient of the construction is a hyper\-K\" ah\-ler manifold $M$ called the target space. Recall that a Riemannian manifold $(M, g)$ is called \hK if $g$ is \kahler \wrt three complex structures $(I_1, I_2, I_3)$ satisfying the quaternionic relations. Denote by $(\om_1, \om_2, \om_3)$ the corresponding \kahler 2-forms and put $ \om = \om_1 i+\om_2 j+\om_3 k\in\Om^2(M;\,\ImH)$. We also assume that $\hat{\mathcal G}=\mathcal G\rtimes Sp(1)$ acts isometrically on $M$ such
that the following two conditions hold:
\begin{itemize}
	\item[(A)] the action of $\mathcal G\subset\hat{\mathcal G}$ is tri-Hamiltonian (in
particular preserves complex structures);
	\item[(B)] the action of $Sp(1)\subset\hat{\mathcal G}$ is \emph{permuting}, i.e. for all  $q\in Sp(1)$ we have
             $\left (L_q\right )^*\om=q\om\bar q$, where $L_q(m)= qm$.
\end{itemize}
\paragraph{Nonlinear Dirac operator.} Below we sketch a construction of a Dirac operator acting on sections of a nonlinear bundle $\rho\colon\mathbb M\rightarrow X$. We refer to~\cite{Haydys_ahol:08} for more details. 

Put $\mathbb M = \hat Q\times_{\hat{\mathcal G}}M=Q_+\times_{Sp_+(1)}M$. For each point $(\hat f, m)\in \hat Q\times M$ we have the following exact sequence%
\begin{equation*}
   0\longrightarrow \hat{\mathcal L}\longrightarrow T_{\hat f}\hat Q\times T_mM
	\xrightarrow{\ \;\tau_*\ }T_{\tau(\hat f,m)}\mathbb M \longrightarrow 0,
\end{equation*}
where $\tau\colon\hat Q\times M\rightarrow \mathbb M$ is the natural projection map. A connection
$B$ on $\hat Q$ can be regarded as a $\hat{\mathcal G}$-invariant splitting %
$T\hat Q=\hat{\mathcal H}\oplus\hat{\mathcal V}$ and induces a similar splitting $\mathrm{T}\mathbb M=\mathcal H\oplus\mathcal V$,
where $\mathcal V=\tau_*(TM)=\ker\rho_*,\ \mathcal H= \tau_*(\hat{\mathcal H})=\rho^*TX$. 

Further, one can think of $f\in Q_+$ as a quaternionic structure on
$T_{\pi_+(f)}X\cong\hat{\mathcal H}_{(g,f)}\subset T_{(g,f)}\hat Q$ and therefore the horizontal
bundle $\hat{\mathcal H}$ is equipped with a quaternionic structure
$(J_1, J_2, J_3)$. For the subbundle $\hat E^-= Hom_{\mathbb H}(\hat{\mathcal H}, TM)$ of %
$Hom_{\mathbb R} (\hat{\mathcal H}, TM) \rightarrow \hat Q\times
M$ we have the natural complement
\begin{equation*}
   \hat E^+=\set{A\in\hom R{\mathcal H}{TM}\left |\right.\; I_1AJ_1+I_2AJ_2+I_3AJ_3=A}.
\end{equation*}
It follows from assumptions (A) and (B) that the splitting %
$\hat{\mathcal H}^*\otimes TM=\hat E^-\oplus \hat E^+$ is $\hat{\mathcal G}$-invariant and therefore
we obtain 
\begin{equation}
   \mathcal H^*\otimes \mathcal V= E^-\oplus E^+,
\end{equation}
where $E^\pm\rightarrow\mathbb M$ is the factor of $\hat E^\pm\rightarrow \hat Q\times M$ by the
$\hat{\mathcal G}$-action. We denote by $\mathcal C\colon \mathcal H^*\otimes \mathcal V\rightarrow E^-$ the projection onto the first subbundle.

\begin{rem}
 In the case $M=\mathbb H,\ \mathcal G=\{ 1\}$ we have $\mathbb M=\mathbb W^+,\ E^-=\rho^*\mathbb W^-$ and %
$\mathcal C\colon \rho^*T^*X\otimes\rho^*\mathbb W^+\rightarrow\rho^*\mathbb W^-$ is the usual Clifford multiplication.
\end{rem}

Let $ev\colon\Gamma(\mathbb M)\times X\rightarrow\mathbb M$ be the evaluation map. Think of sections of $ev^*E^-$ as maps associating to each $u\in\Gamma(\mathbb M)$ a section of $ev^*E^-|_X\cong u^*E^-$.

\begin{defn}\label{Defn_GenerDiracOp}
   The following section of $ev^*E^-$
\begin{equation*}
   \dirac_b\colon \Gamma(\mathbb M)\xrightarrow{\ \nabla^B\ }%
	\Gamma(T^*X\otimes u^*\mathcal V)\cong
	\Gamma\duzhky{u^*\duzhky{  \mathcal H^*\otimes \mathcal V }}\xrightarrow{\ \mathcal C\ }%
	\Gamma(u^*E^-)
\end{equation*}
is called a (generalized) \emph{Dirac operator}, where  $B$ is given
by~(\ref{Eq_DecompOfConnection}).
\end{defn}

A particular role in the sequel is played by  Dirac operators $\dirac_0$ corresponding to $\mathcal G=\set{1}$ (and hence $b=0$). Examples of generalized Dirac operators and corresponding harmonic spinors (i.e. solutions of the equation $\dirac_b u=0$) can be found in~\cite{Haydys_ahol:08}. In section~\ref{Subsect_ASDasPSW} we present an example of a Dirac operator with an infinite-dimensional target space.


Below it will be useful to rewrite the equation $\dirac_b u=0$ in the equivariant setup as
follows. Recall that a section $u$ of $\mathbb M\rightarrow X$ can be identified with an
equivariant map $\hat u\colon \hat Q\rightarrow M$. Then the covariant derivative $\nabla^B u$
is given by the restriction $\hat u_*^h$ of the differential $\hat u_*$ to the horizontal subspace.
It is clear from the above description that $u$ is harmonic iff
\begin{equation}\label{Eq_AholomSection}
    I_1\hat u_*^hJ_1+I_2\hat u_*^hJ_2+I_3\hat u_*^hJ_3=\hat u_*^h,
\end{equation}
or, equivalently, iff pointwise $\hat u_*^h$ has no $\mathbb H$--linear component.

\paragraph{\TP equations.} Recall that the action of $\mathcal G\subset\hat{\mathcal G}$ on $M$ is tri-\-Ham\-il\-tonian. Let $\mu\colon M\rightarrow \ImH\otimes\mathcal L\cong\mathfrak{sp}(1)\otimes\mathcal L$ be a momentum map. We also assume that $\mu$ is $Sp(1)$-equivariant, where $\mathcal L$ is considered as the $Sp(1)$-representation via the restriction of the adjoint representation of $\hat{\mathcal G}$ to $Sp(1)\subset \hat{\mathcal G}$. Then the map $id\times\mu: Q_+\times M\rightarrow Q_+\times\duzhky{\mathfrak{sp}(1)\otimes\mathcal L}$  can be
identified with a section $\nu\in\Gamma\duzhky{\mathbb M;\,\rho^*\duzhky{\Lambda^2_+X\otimes\BundleLieAlgG}}$.
Finally, to any spinor $u\in\Gamma(\mathbb M)$ we associate a self-dual 2-form $\nu\comp u\in\Om^2_+(X;\, \BundleLieAlgG)$.

\begin{defn}
 The following system of first order partial differential equations
 \begin{equation}\label{Eq_PSW}
     \dirac_b u=0,\quad
     F_b^+ + \nu\comp u=0,
 \end{equation} 
for a pair $(u,b)\in\Gamma(\mathbb M)\times\Om^1(X;\,\BundleLieAlgG)$ is called \TP equations.
\end{defn}

Denote $\mathbb G=P_+\times_{Sp_+(1)}\mathcal G$. The gauge group $\Gamma(\mathbb G)$ acts on the configuration space $\Gamma(\mathbb M)\times\Om^1(X;\BundleLieAlgG)$
\begin{equation}\label{Eq_GaugeGpAction}
   g\cdot (u,b)= \bigl (g\cdot u,\; Ad_g\, b - (\nabla^\varphi g)g^{-1}\bigr )
\end{equation}
and preserves the space of solutions of~(\ref{Eq_PSW}). Denote by $\mathcal M_{TP}$ the corresponding moduli space.

\paragraph{A perturbation.} For a positive parameter $\e$ consider the following perturbation
\begin{equation}\label{Eq_PertPSW}
   \dirac_{b_\e} u_\e=0,\quad
   \e F_{b_\e}^+ + \nu\comp u_\e=0.
\end{equation}
Putting formally $\e=0$ we obtain the system
\begin{equation}\label{Eq_LimPSW}
   \dirac_{b_0} u_0 =0,\quad
   \ \nu\comp u_0 =0.
\end{equation}
Let $\mathcal M_{TP}^0$ denote the moduli space of solutions to the above system.

\begin{rem}
A perturbation analogous to~(\ref{Eq_PertPSW}) in the case of symplectic vortex equations have been studied in~\cite{SalamonEtAl:00,SalamonEtAl:02,GaioSalamon:05}.  For the classical Seiberg-Witten equations perturbation similar to~(\ref{Eq_PertPSW})  was studied in~\cite{Taubes:07_SW_Weinstein,Taubes:96_SW_Gr}. It is also interesting to observe that putting formally $\e=+\infty$ we obtain the system 
\begin{equation*}
   \dirac_{b_\infty} u_\infty=0,\quad
     F_{b_\infty}^+=0,
\end{equation*}
which was studied in~\cite{PidstrygachTyurin:92} in the case of the linear Dirac operator.
\end{rem}

From now on we assume that $0\in\mathcal L$ is a regular value of the momentum map $\mu$ and that
the group $\mathcal G$ acts freely on $\mu^{-1}(0)$. This assumption is crucial for the
following Proposition.  Denote by
$M_0=\mu^{-1}(0)/\mathcal G$ the \hK reduction of $M$. Then the permuting action of $Sp(1)$ on $M$
induces a permuting action on $M_0$ and we denote by $\mathbb M_0\rightarrow X$ the associated bundle
$Q_+\times_{Sp_+(1)} M_0$. Observe also that the projection $\mu^{-1}(0)\rightarrow M_0$
gives rise to the fibrewise map $\nu^{-1}(0)\rightarrow\mathbb M_0$. 

\begin{prop}\label{Prop_HarmSpinorHKred}
Assume $0\in\mathcal L\otimes\ImH$ is a regular value of the momentum map $\mu$ and 
$\mathcal G$ acts freely on $\mu^{-1}(0)$. Pick a spinor $u\in\Gamma(\mathbb M)$ such that $\nu\comp
u=0$ and denote by $v\in\Gamma(\mathbb M_0)$ its
projection. Then $\dirac_0 v=0$ if and only if there exists $b\in\Om^1(X;\,\BundleLieAlgG)$
such that $\dirac_b u=0$.
\end{prop}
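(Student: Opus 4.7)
The plan is to pass to the equivariant picture, decompose the covariant derivative of $u$ into a part tangent to the $\mathcal{G}$-orbit and a transverse part, and show that the former can be annihilated by a suitable adjustment of $b$ while the latter descends to $\nabla v$ and carries the obstruction. Identify $u$ with an $\hat{\mathcal G}$-equivariant map $\hat u\colon \hat Q\to M$ and $v$ with an $Sp_+(1)$-equivariant map $\hat v\colon Q_+\to M_0$; the hypothesis $\nu\comp u=0$ ensures that $\hat u$ lands in $\mu^{-1}(0)$, and $\hat v=p\comp \hat u|_{Q_+}$ for the quotient $p\colon\mu^{-1}(0)\to M_0$.

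First I would set up the pointwise linear algebra on the target. At any $m\in\mu^{-1}(0)$, freeness of $\mathcal G$ and regularity of $\mu$ give an orthogonal decomposition
\begin{equation*}
T_m M \;=\; T_m\mathcal{O} \,\oplus\, I_1 T_m\mathcal{O} \,\oplus\, I_2 T_m\mathcal{O} \,\oplus\, I_3 T_m\mathcal{O} \,\oplus\, H_m,
\end{equation*}
where $\mathcal{O}=\mathcal{G}\cdot m$ and $T_m\mu^{-1}(0)=T_m\mathcal{O}\oplus H_m$. Since $H_m$ is the orthogonal complement of the $\mathbb H$-invariant subspace $\mathbb H\cdot T_m\mathcal{O}$, it is itself $\mathbb H$-invariant, and $p_*$ restricts to an $\mathbb H$-linear isomorphism $H_m\xrightarrow{\ \sim\ } T_{[m]} M_0$.

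Next, split $A:=\hat u_*^h = A^{\mathcal O}+A^H$ accordingly. A direct check gives $\hat v_*^h=p_*\comp A^H$, so $A^H$ satisfies the quaternionic condition \eqref{Eq_AholomSection} iff $\dirac_0 v=0$. For the full $A$ the condition $P(A):=\sum_j I_j A J_j = A$ decouples: $P(A^H)$ stays in $\hat{\mathcal H}^*\otimes H_m$, while $P(A^{\mathcal O})$ lies in $\hat{\mathcal H}^*\otimes (I_1+I_2+I_3)T\mathcal{O}$, a subspace meeting $T\mathcal{O}$ only in zero by regularity. Hence $A\in\hat E^+$ (equivalently $\dirac_b u=0$) is equivalent to $P(A^H)=A^H$ together with $A^{\mathcal O}=0$.

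Finally, vary $b$. Replacing $b$ by $b+\eta$ with $\eta\in\Om^1(X;\BundleLieAlgG)$ shifts $\nabla^B u$ by $K_\eta(u)\in T\mathcal{O}$, hence alters only $A^{\mathcal O}$ and leaves $A^H$ untouched. The free action of $\mathcal{G}$ on $\mu^{-1}(0)$ makes $\xi\mapsto K_\xi(m)$ an isomorphism $\mathcal{L}\xrightarrow{\ \sim\ } T_m\mathcal{O}$, so there is a unique $\eta$ killing $A^{\mathcal O}$. Combining the last two steps, $\dirac_0 v=0$ iff $A^H$ is quaternionic iff a (unique) choice of $b$ gives $\dirac_b u=0$. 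The principal obstacle will be the decoupling of the quaternionic condition in the previous step, which rests on the $\mathbb H$-invariance of $H_m$ and on the orthogonality of $T\mathcal{O}$ with each $I_j T\mathcal{O}$ -- both consequences of the regularity/freeness hypothesis -- and without them the argument does not go through.
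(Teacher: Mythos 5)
Your proposal follows the same strategy as the paper's proof: pass to the equivariant picture, decompose $T_mM$ at $m=\hat u(f)$ into $H_m\oplus T_m\mathcal O\oplus I_1T_m\mathcal O\oplus I_2T_m\mathcal O\oplus I_3T_m\mathcal O$, observe that $\hat u_*^h$ lands in $T\mu^{-1}(0)=H_m\oplus T_m\mathcal O$, identify $p_*A^H$ with $\hat v_*^h$, and use freeness of the $\mathcal G$-action to absorb the orbit part $A^{\mathcal O}$ into $K_b$. The ``decoupling'' of the quaternionic condition into the $H_m$-component and the $T\mathcal O$-component is a correct and slightly more explicit reformulation of what the paper does implicitly. (The parenthetical ``$A\in\hat E^+$ (equivalently $\dirac_b u=0$)'' in your second paragraph is a notational slip, since your $A:=\hat u_*^h$ does not include the $K_b$ term; your third paragraph corrects this.)

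There is, however, one genuine gap. In the forward direction ($\dirac_0v=0\Rightarrow\exists b$), you construct $\eta$ pointwise on $Q_+$ by inverting $\xi\mapsto K_\xi(m)$, obtaining a priori only a $1$-form on $Q_+$ with values in $\mathcal L$ (defined on the Levi--Civita horizontal distribution). For this to give the element $b\in\Om^1(X;\BundleLieAlgG)$ required by the proposition, you must check that this $1$-form is $Sp_+(1)$-equivariant, i.e.\ $(R_q)^*b=Ad_{\bar q}\,b$, so that it descends. Your proof silently quantifies ``there is a unique $\eta\in\Om^1(X;\BundleLieAlgG)$ killing $A^{\mathcal O}$,'' but the pointwise inversion only gives uniqueness in $\Om^1(Q_+;\mathcal L)$, not membership in $\Om^1(X;\BundleLieAlgG)$. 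The paper closes this gap with a short computation: from the defining relation $\hat u_*(\mathrm v)-\hat v_*(\mathrm v)=-K_{b(\mathrm v)}(m)$ and the $Sp(1)$-equivariance of $\hat u$ and $\hat v$ one gets $K_{b((R_q)_*\mathrm v)}(\bar q\,m)=K_{Ad_{\bar q}\,b(\mathrm v)}(\bar q\,m)$, and freeness of the $\mathcal G$-action then yields the equivariance of $b$. You should add this verification; without it the existence half of the ``if and only if'' is not established.
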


\begin{proof}
  Let $\hat u$ and $\hat v$ be equivariant maps representing $u$ and $v$ respectively such that the
following diagram%
\begin{equation}\label{CD_LiftSpinors}
 \xymatrix{
  Q_+ \ar[rr]^{\hat u} \ar[drr]^{\hat v} & & \mu^{-1}(0)\ar[d]\\
                    & &   M_0
         }
\end{equation}
commutes. Pick a point $f\in Q_+$  and denote $m=\hat u(f)\in\mu^{-1}(0)\subset M$. Let $\mathcal
K_m\cong\mathcal L$ be the vector space spanned by Killing vectors at the point
$m$. Define the subspace $\EuScript H_m\subset T_mM$ by the following orthogonal
decomposition%
\begin{equation*}
 T_mM=\EuScript H_m\oplus \mathcal K_m\oplus I_1\mathcal K_m \oplus I_2\mathcal K_m \oplus
I_3\mathcal K_m.
\end{equation*}
Notice that $T_m\mu^{-1}(0)=\EuScript H_m\oplus \mathcal K_m$ and $T_{[m]}M_0$ can be
identified with $\EuScript H_m$.

The image of $\hat u_*: T_fQ_+\rightarrow T_m M$ is contained in $\EuScript
H_m\oplus \mathcal K_m$ and the projection to $\EuScript H_m$ yields the differential of
$\hat v$. Since $\mathcal G$ acts freely on $\mu^{-1}(0)$, for each $\mathrm v\in T_fQ_+$
there exists a unique $b(\mathrm v)\in\mathcal L$ such
that %
\begin{equation}\label{Eq_DefinitionOf_b-Form}
 \hat u_*(\mathrm v)-\hat v_*(\mathrm v)=-K_{b(\mathrm v)}(m).
\end{equation}
Then $b\in \Om^1(Q_+; \mathcal L)$ is basic. Further, for $q\in Sp_+(1)$ denote $R_q\colon Q_+\rightarrow Q_+,\ R_q(f)=f\cdot q$.  We have%
\begin{equation*}
 K_{b((R_q)_*\mathrm v)}(\bar q\, m)=(L_{\bar q})_*K_{b(\mathrm v)}(m)=
K_{Ad_{\bar q}\,b(\mathrm v)}(\bar q\, m),
\end{equation*}
where the first equality follows from~(\ref{Eq_DefinitionOf_b-Form}) and the $Sp(1)$-equivariance of both $\hat u$ and $\hat v$. Since the action of $\mathcal G$ is free we get %
 $\bigl ( R_q \bigr )^* b = Ad_{\bar q}\, b$, 
i.e. $b$ descends to a 1-form on $X$ with values in $\BundleLieAlgG$.

Let $B$ be the connection on $\hat Q$ determined by the Levi-Civita connection and the 1-form $b$
as in~(\ref{Eq_DecompOfConnection}). Then the covariant derivative of
$\hat u$ \wrt $B$ can be identified with the restriction of $\hat u_* +K_{b(\cdot )}(\hat u)$ to
the horizontal bundle $\mathcal H^+\rightarrow Q_+$ of the Levi-Civita connection. It remains to
note that by virtue of equation~(\ref{Eq_DefinitionOf_b-Form})  $\dirac_0v=0$  iff for
each point $f\in Q_+$ the restriction of the $\mathbb R$-linear map %
$\hat u_* +K_{b(\cdot )}(\hat u)$ to $\mathcal H_f^+$ has no $\mathbb H$-linear component, i.e.
$\dirac_b u=0$.
\end{proof}


Consider the following space%
\begin{equation*}
 \Gamma_0(\mathbb M_0)=\set{ v\in\Gamma(\mathbb M_0)\, |\ \;\text{there exists } \hat u \text{ s.t. diagram  \eqref{CD_LiftSpinors} commutes}}
\end{equation*}
and denote by $\mathcal H_0(\mathbb M_0)\subset \Gamma_0(\mathbb M_0)$ the subspace of harmonic spinors.

\begin{thm}\label{Thm_LimitingPSW_HarmSpinors}
Assume $0\in\mathcal L\otimes\ImH$ is a regular value of the momentum map $\mu$ and 
$\mathcal G$ acts freely on $\mu^{-1}(0)$. Then there exists a bijective correspondence between the moduli space $\mathcal M^0_{TP}$ of
solutions to limiting problem~(\ref{Eq_LimPSW}) and the subspace %
$\mathcal H_0(\mathbb M_0)$ of harmonic spinors.
\end{thm}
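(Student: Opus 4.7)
The strategy is to build the bijection directly out of Proposition~\ref{Prop_HarmSpinorHKred}. The forward map sends a gauge equivalence class $[u,b]\in\mathcal M^0_{TP}$ to the projected spinor $v\in\Gamma(\mathbb M_0)$, where $v$ is the composition of $u$ with the fibrewise hyperK\"ahler quotient map $\nu^{-1}(0)\rightarrow\mathbb M_0$. Since $\nu\comp u=0$ this composition is defined, and it lies in $\Gamma_0(\mathbb M_0)$ by construction because $\hat u$ itself provides the required lift in diagram~\eqref{CD_LiftSpinors}. By Proposition~\ref{Prop_HarmSpinorHKred}, $\dirac_b u=0$ implies $\dirac_0 v=0$, so $v\in\mathcal H_0(\mathbb M_0)$.

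For the inverse, given $v\in\mathcal H_0(\mathbb M_0)$ I pick any $\hat u\colon Q_+\to\mu^{-1}(0)$ making diagram~\eqref{CD_LiftSpinors} commute; this determines a spinor $u\in\Gamma(\mathbb M)$ with $\nu\comp u=0$. Applying the other direction of Proposition~\ref{Prop_HarmSpinorHKred}, from $\dirac_0 v=0$ I obtain a $1$-form $b\in\Om^1(X;\BundleLieAlgG)$ with $\dirac_b u=0$, and the pair $(u,b)$ solves~\eqref{Eq_LimPSW}. I then set the inverse map to be $v\mapsto [u,b]$.

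The remaining content is to check that both assignments descend to, and are inverse on, the quotients. First I verify well-definedness of the forward map: a gauge transformation $g\in\Gamma(\mathbb G)$ acts on $u$ via the fibrewise $\mathcal G$-action, so the projection to $\mathbb M_0=\mathbb M/\mathcal G$ is unaffected, and $v$ depends only on $[u,b]$. For well-definedness of the inverse map, two lifts $\hat u_1,\hat u_2$ of the same $v$ differ fibrewise by the free action of $\mathcal G$ on $\mu^{-1}(0)$, hence by a unique gauge transformation $g\in\Gamma(\mathbb G)$ with $u_2=g\cdot u$; the uniqueness of $b$ in the proof of Proposition~\ref{Prop_HarmSpinorHKred} (coming from freeness and formula~\eqref{Eq_DefinitionOf_b-Form}) then forces the corresponding connection forms $b_1,b_2$ to be related by~\eqref{Eq_GaugeGpAction}, so $[u_1,b_1]=[u_2,b_2]$. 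That the two maps are mutually inverse is now immediate: starting from $[u,b]$ and projecting, a lift of the resulting $v$ can be chosen to be $\hat u$ itself, and the $b$ produced by Proposition~\ref{Prop_HarmSpinorHKred} is forced by uniqueness to be the original one.

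The main subtlety, and the step that really uses the regularity and freeness hypotheses, is this uniqueness part: freeness of $\mathcal G$ on $\mu^{-1}(0)$ is precisely what makes the lift of $v$ unique up to gauge, and it is what determines $b$ uniquely from $u$ through~\eqref{Eq_DefinitionOf_b-Form}. Everything else is a direct consequence of Proposition~\ref{Prop_HarmSpinorHKred} together with the definition of $\Gamma_0(\mathbb M_0)$.
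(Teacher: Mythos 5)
Your proposal is correct and follows essentially the same route as the paper: construct the forward map by projecting $u$ through the fibrewise hyperK\"ahler quotient, invoke Proposition~\ref{Prop_HarmSpinorHKred} in both directions, and check well-definedness on the quotient by observing that two lifts of the same $v$ differ by a gauge transformation $g\in\Gamma(\mathbb G)$ and that the associated connection forms then transform by~\eqref{Eq_GaugeGpAction}. The paper's own proof is more terse but makes exactly these moves, with the same use of freeness to get uniqueness of $b$ from~\eqref{Eq_DefinitionOf_b-Form}.
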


\begin{proof}
 It follows from Proposition~\ref{Prop_HarmSpinorHKred} that we have a map  from the space of
solutions of~(\ref{Eq_LimPSW}) to $\mathcal H_0(\mathbb M_0)$, which factors 
through $\mathcal M_{TP}^0$. To construct the inverse map pick a harmonic spinor
$v\in\Gamma_0(\mathbb M_0)$. Then $u,u'\in\Gamma(\mathbb M)$ satisfying $\nu\comp u=0=\nu\comp u'$ 
are lifts of $v$ iff there exists $g\in\Gamma(\mathbb G)$ such that $u'=g\cdot u$. Then it is
easy to check that for the corresponding 1-forms $b$ and $b'$ as in 
Proposition~\ref{Prop_HarmSpinorHKred} we have %
 $b'=Ad_g\, b - (\nabla^\varphi g)g^{-1}$
and the statement follows.
\end{proof}

Proposition~\ref{Prop_HarmSpinorHKred} and   Theorem~\ref{Thm_LimitingPSW_HarmSpinors} are analogues of Lemmata~4.5.7 and~4.5.9 in~\cite{Haydys:06_Thesis} and are also proved in a similar manner. Theorem~\ref{Thm_LimitingPSW_HarmSpinors} was independently discovered by Pidstrygach~\cite{Pidstrygach:06_Talk}.

Theorem~\ref{Thm_LimitingPSW_HarmSpinors} can be regarded as a quaternionic version of the relation between moduli spaces of solutions to the symplectic vortex equations and pseudoholomorphic curves in symplectic reductions~\cite{SalamonEtAl:00,SalamonEtAl:02,GaioSalamon:05}. More precisely, a four-manifold takes the place of a Riemann surface, the role of the symplectic vortex equations is played by the \TP equations, the symplectic reduction is replaced by the \hK reduction, and pseudoholomorphic curves become generalized harmonic spinors. However there is an important distinction between the complex and quaternionic cases. Whereas in the complex case the focus is on the target manifold  (or rather on its symplectic reduction), in the quaternionic case it is interesting to study both the target and the source with the help of the \TP equations. Indeed, even the choice of the simplest admissible target manifold $\mathbb H$ leads to the standard Seiberg-Witten theory, which carries information about the smooth structure of the source manifold.


\subsection{Spin(7)-instantons on spinor bundles as Taubes-Pid\-stry\-gach system}\label{Subsect_ASDasPSW} 

The main aim of this section is to prove that the $Spin(7)$-instanton equations on the total space of a spinor bundle
$\mathbb W^+\rightarrow X$ is an example of the \TP system with an infinite dimensional target space. Let us introduce some notation first.


Let $P\rightarrow\mathbb R^4$ be a principal $G$-bundle equipped with a framing at infinity. We assume that the group $Sp(1)$ acts on the total space of $P$ commuting with $G$ and descending to basic action~(\ref{Eq_BasicSp1Action}) on $\mathbb R^4$. We also assume that the $Sp(1)$-action is  compatible with the framing at infinity. Let $\mathcal A^0(P)$ and $\mathcal G^0(P)$ consist of connections and gauge transformations on $P$ respectively with a suitable asymptotic behaviour at infinity (see~\cite{Itoh:89} for details). One can think of $\mathcal A^0(P)$ and $\mathcal G^0(P)$ as the space of connections and the \emph{based} gauge group on $S^4$ respectively. 

Put $M=W^+$ in the set-up of Section~\ref{Subsect_DiffFormsOnFibreBundles} and consider the bundle $\mathbb P=Q_+\times_{Sp_+(1)}P\rightarrow\mathbb W^+$. For the principal bundle $Q_+$ denote by $\mathbb{A}, \mathbb G, \BundleLieAlgG$ the associated fibre bundles over $X$ with fibres %
$\mathcal A^0(P), \mathcal G^0(P), \mathcal L^0(P)=Lie(\mathcal G^0(P))$ respectively. One can think of  $\mathbb{A}, \mathbb G$ and $\BundleLieAlgG$ as the bundles obtained by replacing each fibre $\mathbb W^+_x$ by   %
$\mathcal A^0(i^*_x\mathbb P),  \mathcal G^0(i^*_x\mathbb P)$ and  $ad\, (i^*_x\mathbb P)$ respectively.

\begin{ex}[The Dirac operator for the target $\mathcal A^0(P)$]\label{Ex_DiracOpOn1Forms}
 The construction of the Dirac operator in the case of a flat target space is somewhat simpler and we can give a more direct description.

We first choose $\Om^1(\mathbb R^4)$ as the target space. The basic action~(\ref{Eq_BasicSp1Action}) induces the permuting action of $Sp_+(1)$.  Then $\Om^1(W^+)$ is isomorphic to $C^{\infty}(W^+)\otimes W^+$ as an $Sp_+(1)$-representation and we get a variant of the Clifford multiplication
\begin{equation*}
 Cl\colon W^+\otimes W^-\otimes \Om^1(W^+)\cong 
W^+\otimes W^-\otimes C^{\infty}(W^+)\otimes W^+
\longrightarrow C^{\infty}(W^+)\otimes W^-,
\end{equation*}
which differs from the standard Clifford multiplication just by tensoring with $C^\infty(W^+)$.
With these choices the space of spinors is $\Gamma \bigl (\mathcal E^1\duzhky{\mathbb W^+}\bigr )$ and the corresponding (untwisted) Dirac operator is given by the sequence%
\begin{equation*}
 \Gamma \bigl (\mathcal E^1\duzhky{\mathbb W^+}\bigr ) \xrightarrow{\ \nabla^\varphi\ }%
 \Gamma \bigl (T^*X\otimes \mathcal E^1\duzhky{\mathbb W^+}\bigr )
          \xrightarrow{\ Cl\ } \Gamma \bigl (\mathbb W^-\!\otimes\mathcal E^0(\mathbb W^+)\bigr )\cong \Gamma \bigl (\rho^*\mathbb W^-\bigr ).
\end{equation*}
It follows from~(\ref{Eq_Lambda2PlusDecomposition}) that $\Lambda^2_+\mathrm T^*\mathbb W^+\cong%
\rho^*\Lambda^2_+T^*X\oplus\rho^*\mathbb W^-$. Then using the identification $\Om^p(\mathcal E^q)\cong \Om^{p,q}(\mathbb W^+)$ as in Section~\ref{Subsect_DiffFormsOnFibreBundles} and recalling~\eqref{Eq_ProjPiPrime}, the above sequence yields:%
\begin{equation*}
 \Om^{0,1}(\mathbb W^+)\xrightarrow{\ d^{1,0}\, }%
 \Om^{1,1}(\mathbb W^+)\longrightarrow \Om^2_+(\mathbb W^+),
\end{equation*}
where the last map is the natural projection.

Now let us take $\mathcal A^0(P)$  instead of $\Om^1(\mathbb R^4)$ as the target space. The gauge group $\mathcal G=\mathcal G^0(P)$ acts on $\mathcal A^0(P)$ preserving the \hK structure. If we define the homomorphism $\alpha\colon Sp(1)\rightarrow Aut(\mathcal G)$ by
\begin{equation*}
 (qg)(p)=g(\bar q\,p),\qquad p\in P,\quad g\in\mathcal G,
\end{equation*}
then $\mathcal G^0(P)\rtimes Sp(1)$ acts on $\mathcal A^0(P)$ such that conditions
(A) and (B) are satisfied. 
The corresponding Dirac operator $\dirac_b$ is given by
\begin{equation*}
  \Gamma(\mathbb A) \xrightarrow{\;\nabla^B\;}\Om^1(X; \mathcal E^1(ad\, P))\cong%
   \Om^{1,1}(\mathbb W^+\!;\, ad\,\mathbb P)\longrightarrow%
   \Om^2_+(\mathbb W^+\!;\, ad\,\mathbb P).
\end{equation*}
Further, for any $a\in\Gamma(\mathbb A)$ we have $\nabla^B a= \nabla^\varphi a+\nabla^a b$. Thus, in the notations of Proposition~\ref{Prop_CompOfCurvature} formula~(\ref{Eq_CurvatureComponent11}) yields:%
\begin{equation*}
   \dirac_b(a)=(\nabla^\varphi a +\nabla^a b)^+= (F_{A}^{1,1})^+, \qquad\text{where } A=\hat a+\hat b.
\end{equation*}
Here $(F_A^{1,1})^+\in\Om^2_+(\mathbb W^+; ad\,\mathbb P)$ denotes the self-dual component of $F_A^{1,1}$. 
\end{ex}

Recall that the moment map of the $\mathcal G=\mathcal G^0(P)$-action on $M=\mathcal A^0(P)$ is given by %
   $\mu (a)=F_a^+\in\Om^2_+(\mathbb R^4;\, ad\, P)\cong\ImH\otimes\mathcal L$. Thus we get the corresponding section $\nu$.

\begin{thm}\label{Thm_ASDarePSW}
   A connection $A=\hat a+\hat b$ on $\mathbb P\rightarrow \mathbb W^+$ is anti-self-dual iff the
pair $(a,b)\in\Gamma(\mathbb A)\times\Om^1(X;\,\BundleLieAlgG)$ is a solution to the
Taubes-Pidstrygach-type equations with the target manifold $M=\mathcal A^0(P)$%
   \begin{numcases}{}
      \dirac_b a=0,\label{Eq_PSW_FirstEq}\\
      F_b^+ + \nu\comp a=-(\imath_\Phi a)^+,\label{Eq_PSW_SecEq}	
   \end{numcases}
where $\Phi\in\Om^2(X;\,\Lambda^2_+X)$ is the curvature form of the component $\varphi$ of the Levi-Civita connection.
\end{thm}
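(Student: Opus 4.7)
The plan is to reduce the identity $F_A^+=0$ to a pointwise piece of linear algebra using the bigraded decomposition $F_A = F_A^{2,0}+F_A^{1,1}+F_A^{0,2}$ supplied by Proposition~\ref{Prop_CompOfCurvature}, and then to recognise the two resulting pieces as exactly the left-hand sides of~(\ref{Eq_PSW_FirstEq}) and~(\ref{Eq_PSW_SecEq}). The linear-algebraic input is the decomposition $\Lambda^2_+U^*\cong \mathfrak{sp}_+(1)\oplus V$ from~(\ref{Eq_Lambda2PlusDecomposition}), together with the explicit formulae for the two projections $\Pi'$ (the twisted Clifford multiplication) and $\Pi''(\alpha,\beta)=\alpha^+-\beta^+$. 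Since these projections are $K$-equivariant and the frame bundle of $T\mathbb W^+$ inherits a reduction to $K$ from the Levi-Civita connection via the Bryant-Salamon construction, the pointwise discussion of the toy model in Section~\ref{Sect_ToyModel} applies fibrewise.

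First I would split the condition $F_A^+=0$ along $\mathfrak{sp}_+(1)\oplus V$: it is equivalent to the simultaneous vanishing of the $V$-component $\Pi'(F_A^{1,1})$ and the $\mathfrak{sp}_+(1)$-component $\Pi''\bigl((F_A^{2,0})^+,(F_A^{0,2})^+\bigr)=(F_A^{2,0})^+-(F_A^{0,2})^+$. Example~\ref{Ex_DiracOpOn1Forms} already identifies the $V$-component: combining $\dirac_b(a)=(F_A^{1,1})^+$ with the observation that $(F_A^{1,1})^+$ lies entirely in the $V$-summand (the mixed bidegree $(1,1)$ cannot produce an $\mathfrak{sp}_+(1)$-component), one concludes $\Pi'(F_A^{1,1})=0\iff \dirac_b a=0$, which is precisely~(\ref{Eq_PSW_FirstEq}).

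Next I would treat the $\mathfrak{sp}_+(1)$-component. By~(\ref{Eq_CurvatureComponent02}) and~(\ref{Eq_CurvatureComponent20}), $F_A^{0,2}=F_a$, interpreted as the family of fibrewise curvatures, while $F_A^{2,0}=-\imath_\Phi a + d_\varphi b +[b,b]=-\imath_\Phi a+F_b$. Taking self-dual parts, the condition $(F_A^{2,0})^+=(F_A^{0,2})^+$ becomes $F_b^+-(\imath_\Phi a)^+=F_a^+$. Because $F_a^+$ is, fibrewise, the moment map value $\mu(a)$, the corresponding section of $\rho^*\bigl(\Lambda^2_+X\otimes \BundleLieAlgG\bigr)$ is $\nu\comp a$ in the notation of Section~\ref{Subsec_PerturbedTPeqns}. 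Rearranging (and matching the sign convention for $\mu$ used in Definition~\ref{Defn_GenerDiracOp}, under which the unperturbed equation reads $F_b^++\nu\comp a=0$) produces $F_b^+ + \nu\comp a = -(\imath_\Phi a)^+$, which is~(\ref{Eq_PSW_SecEq}).

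The main obstacle I expect is verifying carefully that the fibrewise linear algebra of Section~\ref{Subsect_GroupSpin7}, formulated in the flat model, transfers pointwise to $\mathbb W^+$. This amounts to identifying the horizontal-vertical splitting induced by the Levi-Civita connection with the $E\oplus F$ splitting of the $K$-representation $U$ and checking that $\Pi'$ and $\Pi''$ intertwine correctly with the curvature components in Proposition~\ref{Prop_CompOfCurvature}. The new term $-(\imath_\Phi a)^+$, absent from the flat toy model~(\ref{Eq_ToySpin7Instantons}), is the precise price of working over a curved base: it enters through the curvature correction in~(\ref{Eq_CurvatureComponent20}), and the proof must track it through the self-dual projection.
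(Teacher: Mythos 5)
Your argument reproduces the paper's proof in substance: decompose $F_A^+=0$ along the summands $\mathfrak{sp}_+(1)\oplus V$ of~(\ref{Eq_Lambda2PlusDecomposition}), identify the $V$-piece $(F_A^{1,1})^+$ with $\dirac_b a$ via Example~\ref{Ex_DiracOpOn1Forms}, and read off the $\mathfrak{sp}_+(1)$-piece from~(\ref{Eq_CurvatureComponent02}) and~(\ref{Eq_CurvatureComponent20}). That is exactly the structure of the paper's one-paragraph proof.

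The one place your write-up does not actually close is the final sign bookkeeping. From $\Pi''(\alpha,\beta)=\alpha^+-\beta^+$ you obtain $F_b^+-F_a^+=(\imath_\Phi a)^+$, and since $\nu\comp a$ is built from $\mu(a)=F_a^+$, no rearrangement alone produces $F_b^++\nu\comp a=-(\imath_\Phi a)^+$: both $F_a^+$ and $(\imath_\Phi a)^+$ come out with the opposite sign. You correctly sense that a convention must be pinned down here, but invoking a phrase about matching the sign convention for $\mu$ is not a substitute for doing the accounting. In fairness, the paper glosses over the same point: its proof asserts the condition as $(F_A^{2,0}+F_A^{0,2})^+=0$, which already disagrees with the displayed formula for $\Pi''$, and the toy-model equation $F_b^+=F_a^+$ in~(\ref{Eq_ToySpin7Instantons}) likewise disagrees in sign with the TP equation $F_b^++\mu(u)=0$ once $\mu(a)=F_a^+$. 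The reconciliation is that the identifications $\Lambda^2_+E^*\cong\mathfrak{sp}_+(1)\cong\Lambda^2_+F^*$ and the hyperK\"ahler moment map are each fixed only up to sign; under a consistent choice $\Pi''$ becomes $\alpha^++\beta^+$ and~(\ref{Eq_PSW_SecEq}) then drops out. Make that choice explicit rather than folding it into a ``rearranging'' step.
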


\begin{proof} It follows from the results of Section~\ref{Subsect_GroupSpin7} that a connection $A$ is asd iff %
$(F_A^{1,1})^+=0$ and $(F_A^{2,0}+F_A^{0,2})^+=0$. Recalling formulae~(\ref{Eq_CurvatureComponent02}) and~(\ref{Eq_CurvatureComponent20}) one immediately sees that the second equation is equivalent to~(\ref{Eq_PSW_SecEq}). On the other hand, we have shown in Example~\ref{Ex_DiracOpOn1Forms} that the first equation is equivalent to~(\ref{Eq_PSW_FirstEq}). \end{proof}

Notice also that the asd equations with respect to the form $\Om_\e$ (see Remark~\ref{Rem_LimitOfProjections}) lead to the following perturbation of equations~(\ref{Eq_PSW_FirstEq}),(\ref{Eq_PSW_SecEq}): 
\begin{equation*}
 \begin{cases}
   \dirac_{b_\e} a_\e=0,\\
      \e\duzhky{F_{b_\e}+\imath_\Phi\, a_\e}^+ + \nu\comp a_\e=0.
 \end{cases}
\end{equation*}
Then, the formal limiting form of the above equations is%
\begin{equation}\label{Eq_LimitingASDInstantons}
  \begin{cases}
    \dirac_{b_0} a_0=0,\\
     F_{a_0}^+=0,
   \end{cases}
\Longleftrightarrow\quad
 \begin{cases}
      (F_{A_0}^{1,1})^+=0,\\
      (F_{A_0}^{0,2})^+=0,
  \end{cases}
\qquad A_0=\hat a_0+\hat b_0.
\end{equation}

On the other hand, the group $\mathcal G^0(P)$ of gauge transformations based at infinity acts freely on the space of asd-connections and the quotient is the space of framed asd connections $\mathcal M_{asd}^0$. In other words, $\mathcal M_{asd}^0$ is the \hK reduction of $\mathcal A^0(P)$  \wrt $\mathcal G^0(P)$. We denote $\mathbb M_{asd}^0=Q_+\times_{Sp_+(1)}\mathcal M_{asd}^0$. One can think of $\mathbb M_{asd}^0\rightarrow X$ as the fibre bundle obtained by replacing each fibre $\mathbb W_x^+\cong\mathbb R^4$ of the usual spinor bundle $\mathbb W^+\rightarrow X$ by the moduli space of framed instantons over $\mathbb W_x^+$. Then from Theorem~\ref{Thm_LimitingPSW_HarmSpinors} we obtain the following corollary.


\begin{cor}\label{Cor_InstantonsAndHarmSpinors}
There exits a natural bijective correspondence between the moduli space of solutions to equations~(\ref{Eq_LimitingASDInstantons})  and the subspace  $\mathcal H_0(\mathbb M_{asd}^0)$ of harmonic spinors.\qed
\end{cor}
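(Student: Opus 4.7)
The plan is to deduce Corollary \ref{Cor_InstantonsAndHarmSpinors} as a direct specialization of Theorem \ref{Thm_LimitingPSW_HarmSpinors} to the infinite-dimensional target $M = \mathcal{A}^0(P)$, once we verify that the hypotheses of that theorem are met and that the limiting ASD equations \eqref{Eq_LimitingASDInstantons} coincide with the limiting \TP equations \eqref{Eq_LimPSW} in this setting. Concretely, I would first observe that the $\e \to 0$ form of the perturbed ASD system is exactly the limiting TP system for this target: the perturbation term $\e\, \imath_\Phi a_\e$ drops out, and since the moment map of the $\mathcal{G}^0(P)$-action on $\mathcal{A}^0(P)$ is $\mu(a) = F_a^+$, the section $\nu$ associated with $\mu$ (as in Section~\ref{Subsec_PerturbedTPeqns}) satisfies $\nu\comp a_0 = F_{a_0}^+$. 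Therefore \eqref{Eq_LimitingASDInstantons} reads $\dirac_{b_0} a_0 = 0$, $\nu \comp a_0 = 0$, which is precisely \eqref{Eq_LimPSW}; by Theorem~\ref{Thm_ASDarePSW} together with the discussion preceding \eqref{Eq_LimitingASDInstantons}, this identification is gauge-equivariant, so the respective moduli spaces agree.

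Next I would check the hypotheses: that $0$ is a regular value of $\mu$ and that $\mathcal{G}^0(P)$ acts freely on $\mu^{-1}(0)$. Both are standard facts in the theory of framed instanton moduli spaces. Freeness of the based gauge group action on the set of asd connections is classical (the stabilizer of a connection consists of covariantly constant gauge transformations equal to $1$ at infinity, hence trivial). Regularity of $0$ is equivalent, via the standard deformation complex, to the vanishing of $H^2$ of the asd deformation complex for solutions on $\mathbb{R}^4$ with framing at infinity, which is a well-known consequence of a vanishing/index argument (see Itoh's work cited in the paper). Together these imply that $\mathcal{A}^0(P) \hkred \mathcal{G}^0(P) = \mathcal{M}_{asd}^0$ is a smooth hyperK\"ahler manifold, exactly the situation required by Theorem~\ref{Thm_LimitingPSW_HarmSpinors}.

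With these ingredients, the conclusion follows by applying Theorem~\ref{Thm_LimitingPSW_HarmSpinors} verbatim: the bijection sends a gauge equivalence class $[a_0, b_0]$ of a solution to \eqref{Eq_LimitingASDInstantons} to the projection $v \in \Gamma_0(\mathbb{M}_{asd}^0)$ of $a_0$ under $\nu^{-1}(0) \to \mathbb{M}_{asd}^0$, where $\mathbb{M}_{asd}^0 = Q_+ \times_{Sp_+(1)} \mathcal{M}_{asd}^0$ as defined in the paragraph preceding the corollary. Proposition~\ref{Prop_HarmSpinorHKred} then guarantees that $v$ is a harmonic spinor, and conversely any harmonic spinor in $\mathcal{H}_0(\mathbb{M}_{asd}^0)$ admits a lift to a pair $(a_0, b_0)$ solving \eqref{Eq_LimitingASDInstantons} with $b_0$ unique modulo the gauge action \eqref{Eq_GaugeGpAction}.

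The main subtlety I would expect to be the main obstacle is not the logical deduction itself, which is immediate once the framework is in place, but rather the justification that the infinite-dimensional \hK quotient $\mathcal{A}^0(P) \hkred \mathcal{G}^0(P)$ genuinely fits into the formal framework of Section~\ref{Subsec_PerturbedTPeqns}. In particular, one needs suitable Sobolev completions for $\mathcal{A}^0(P)$ and $\mathcal{G}^0(P)$ so that the nonlinear Dirac operator $\dirac_b$ is well-defined on sections of $\mathbb{A}$, the moment map identity $\mu(a) = F_a^+$ makes sense, and the regularity/freeness arguments quoted above apply in the chosen analytic setting. The paper deliberately defers such analytic questions, and once accepted as a working framework, the corollary reduces to transporting the bijection of Theorem~\ref{Thm_LimitingPSW_HarmSpinors} through the identifications above.
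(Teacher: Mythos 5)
Your proposal is correct and follows essentially the same route as the paper: both identify the target as $M=\mathcal A^0(P)$ with $\mathcal G=\mathcal G^0(P)$ and $\mu(a)=F_a^+$, observe that $\mathcal M_{asd}^0=\mathcal A^0(P)\hkred\mathcal G^0(P)$ with the required freeness, and then read off the corollary as an instance of Theorem~\ref{Thm_LimitingPSW_HarmSpinors}. The paper leaves these checks implicit (hence the bare \qed), while you spell out the regularity and freeness hypotheses and flag the deferred analytic issues, which is a reasonable expansion of the same argument rather than a different one.
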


\begin{rem}
When this paper has been essentially ready for publication, S. Donaldson communicated to the author a direct proof of Corollary~\ref{Cor_InstantonsAndHarmSpinors}, which has appeared in~\cite{DonaldsonSegal:09}. The reader can also find there, among other things, the relevance of  Corollary~\ref{Cor_InstantonsAndHarmSpinors} to the compactification of the moduli space of higher dimensional instantons.
\end{rem}

\begin{ex}
 Let $X^4$ be a compact \hK manifold, so that $Q_+$ is flat.  Then~\cite{Haydys_ahol:08} harmonic spinors are exactly aholomorphic maps $X\rightarrow \mathcal M_{asd}^0$. It follows from~\cite[Cor.\,2]{Haydys_ahol:08}  that each aholomorphic map $X\rightarrow \mathcal M_{asd}^0$ is constant. Hence we have $\mathcal H(\mathbb M_{asd}^0)=\mathcal M_{asd}^0$.
\end{ex}


\section{Spin(7)-instantons and Cayley fibrations}\label{Sect_InstantonsAndCayleyFibr}

In this section we show that a suitable modification of Corollary~\ref{Cor_InstantonsAndHarmSpinors} holds for $Spin(7)$-manifolds equipped with a structure of Cayley fibration. It is convenient to fix some terminology first. 

Let $E\rightarrow W$ be a real vector bundle of rank $4k$ over a manifold $W$. We say that $E$ is \emph{quasiquaternionic}, if a subbundle $\mathcal I\subset End(E)$ of real rank 3 admitting local trivializations $(I_1, I_2, I_3)$ with quaternionic relations%
\begin{equation}\label{Eq_QuatRelations}
 I_1I_2=-I_2I_1=I_3,\qquad I_1^2=I_2^2=I_3^2=-id
\end{equation}
is given. In this case $\mathcal I$ is called the structural bundle. Since any two trivializations of $\mathcal I$ as above differ by an $SO(3)$-gauge,  the structural bundle is naturally an oriented Euclidean vector bundle.

An example of quasiquaternionic bundle is any oriented Euclidean vector bundle $E$ of real rank 4. In this case we choose by default the structural bundle to be $\mathfrak{so}_+(E)\cong\Lambda^2_+E$. Another example is the tangent bundle of a quaternionic \kahler manifold. The vertical bundle of the nonlinear spinor bundle $\mathbb M$ as in Section~\ref{Subsect_ASDasPSW} is also quasiquaternionic.

Let $X$ be an arbitrary oriented four-manifold. Suppose $\rho\colon W\rightarrow X$ is a fibre bundle such that the vertical bundle $\mathcal V_\rho=\ker\rho_*$ is quasiquaternionic. Assume also that $W$ is equipped with a connection such that the horizontal bundle $\mathcal H_\rho$ is also quasiquaternionic. For instance, this is the case if $X$ is Riemannian.  

Suppose an isomorphism $\gamma\colon\mathcal I(\mathcal H_\rho)\rightarrow \mathcal I(\mathcal V_\rho)$ compatible with the Euclidean structures is given, i.e., for any local trivialization $(I_1, I_2, I_3)$ of $\mathcal I(\mathcal H_\rho)$ satisfying~(\ref{Eq_QuatRelations})  the triple  
$(J_1, J_2, J_3)=(\gamma(I_1), \gamma(I_2), \gamma(I_3))$ is a local trivialization of  $\mathcal I(\mathcal V_\rho)$ also satisfying~(\ref{Eq_QuatRelations}). Then the subbundles%
\begin{equation*}
   \begin{aligned}
     E_\rho^+ &=\set{A\in\hom R{\mathcal H_\rho}{\mathcal V_\rho}\; |\; I_1AJ_1+ I_2AJ_2+I_3AJ_3=A\;  },\\
    E_\rho^- &=\set{A\in\hom R{\mathcal H_\rho}{\mathcal V_\rho}\; |\; I_1AJ_1+ I_2AJ_2+I_3AJ_3=-3A\;  }
   \end{aligned}
\end{equation*}
of $\hom R{\mathcal H_\rho}{\mathcal V_\rho}$ are well defined and therefore we have the decomposition%
\begin{equation*}
  \mathcal H_\rho^*\otimes\mathcal V_\rho = E_\rho^+\oplus E_\rho^-.
\end{equation*}
Recall that for any $u\in\Gamma(X; W)$ the covariant derivative $\nabla u$ is a section of $T^*X\otimes u^*\mathcal V_\rho\cong u^*\duzhky{\mathcal H_\rho^*\otimes \mathcal V_\rho}$.

\begin{defn}
 The first order differential operator $\dirac$ defined by the sequence%
\begin{equation*}
 \Gamma(X; W)\xrightarrow{\ \nabla\ }\Gamma(T^*X\otimes u^*\mathcal V_\rho)\longrightarrow \Gamma(u^*E_\rho^-)
\end{equation*}
is called a (generalized) \emph{Dirac operator}.
\end{defn}

Notice that unlike in Definition~\ref{Defn_GenerDiracOp}, in the above definition the source manifold $X$ does not need to be equipped with a Riemannian structure.

\medskip

From now on we assume that $(W, g, \Omega)$ is a $Spin(7)$-manifold equipped with a Cayley fibration $\rho\colon W\rightarrow X$, where  $X$ is an arbitrary oriented four-manifold. Moreover, we also assume that $W$ is compact and $\rho$ has no critical points. The compactness of $W$ is assumed for the simplicity of exposition\footnote{the spinor bundle over a four-manifold provides a model for non-compact manifolds equipped with a Cayley fibration}, whereas the second assumption is an oversimplification. However, it is a necessary step before considering the general situation when some fibres are allowed to be singular.

For the Cayley fibration we define the horizontal bundle as the orthogonal complement of $\mathcal V_\rho$:%
\begin{equation}\label{Eq_SplittingTW}
 TW=\mathcal H_\rho\oplus \mathcal V_\rho.
\end{equation}
Observe that we have a distinguished isomorphism $\gamma\colon\mathfrak{so}_+(\mathcal H_\rho)\rightarrow \mathfrak{so}_+(\mathcal V_\rho)$. Indeed, let $Q(W)\rightarrow W$ be the $Spin(7)$-structure of $W$. Recall~\cite[Thm~1.38]{HarveyLawson:82} that at each point $w\in W$ the subgroup $K\subset Spin(7)$ that respects splitting~(\ref{Eq_SplittingTW}) is isomorphic to (\ref{Eq_GpK})  and  denote by $Q_\rho\rightarrow W$ the corresponding principal $K$-subbundle of $Q(W)$. Then $\mathcal H_\rho=Q_\rho\times_K E$ and $\mathcal V_\rho=Q_\rho\times_K F$ for some $K$-representations $E$ and $F$ such that $\Lambda^2_+E\cong\mathfrak{so}_+(3)\cong\Lambda^2_+ F$. Hence, we get the desired isomorphism~$\gamma$.

\begin{rem}
Each fibre $W_x$ of the Cayley fibration has  a hyperHermitian structure (defined up to an $SO(3)$-rotation). Indeed, pick a frame in $T_xX$. Then the horizontal lift combined with the Gram-Schmidt process defines a trivialization of $\mathcal H_\rho|_{W_x}$, so that $\mathfrak{so}_+(\mathcal H_\rho)$ also carries a trivialization. Finally, we equip  %
$W_x$ with a hyperHermitian structure via the map~$\gamma$.
\end{rem}

Further, similarly as in Section~\ref{Subsect_DiffFormsOnFibreBundles} the space of differential forms on $W$ is naturally equipped with the bigrading so that we have%
\begin{equation*}
   \begin{aligned}
      d = d^{1,0} + d^{0,1}+ d^{2,-1}\qquad\text{and }\qquad  \Om =\Om^{4,0}+\Om^{2,2}+\Om^{0,4}.
   \end{aligned}
\end{equation*}

For any $\e\in (0, 1]$ consider the metric $g_\e=g_h+ \e g_v$, where $g_h$ and $g_v$ are Euclidean scalar products on $\mathcal H_\rho$ and $\mathcal V_\rho$ respectively. The corresponding 4-form $\Om_\e$ is of comass 1 but in general it does not need to be closed. 

\begin{lem}
 For any $\e\in (0, 1)$ there exists a decomposition $\Om_\e=\Om_{1, \e}+ \Om_{2, \e}$ such that $\Om_{1, \e}$ is closed and $\Om_{2, \e}$ satisfies%
\begin{equation}\label{Eq_Om2e}
 -\om\wedge\om\wedge\Om_{2, \e} < |\om |^2 vol_W
\end{equation}
for any $\om\in\Om^2(W)$.
\end{lem}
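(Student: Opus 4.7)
The plan is to use the bigraded decomposition of forms on $W$ induced by the horizontal/vertical splitting $TW=\mathcal H_\rho\oplus\mathcal V_\rho$. Write $\Om_\e = \alpha + \e\beta + \e^2\gamma$ with $\alpha\in\Om^{4,0}(W)$, $\beta\in\Om^{2,2}(W)$, $\gamma\in\Om^{0,4}(W)$ the $K$-invariant components of the Cayley form (cf.\ Remark~\ref{Rem_LimitOfProjections}). By Theorem~\ref{Thm_ComponentsOfDifferential} the exterior derivative decomposes as $d=d^{1,0}+d^{0,1}+d^{2,-1}$. Expanding $d\Om_1=0$ bidegree by bidegree, and using that forms of bidegree $(5,\bullet)$ or $(\bullet,5)$ vanish since $\dim\mathcal H_\rho=\dim\mathcal V_\rho=4$, one obtains
\[
d^{1,0}\beta=0=d^{1,0}\gamma,\qquad d^{0,1}\alpha=-d^{2,-1}\beta,\qquad d^{0,1}\beta=-d^{2,-1}\gamma.
\]

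Using these relations, a direct computation yields $d\Om_\e=d\bigl[(1-\e^2)\alpha+\e(1-\e)\beta\bigr]$, or equivalently the algebraic identity $\Om_\e=\e^2\Om_1+(1-\e^2)\alpha+\e(1-\e)\beta$. This suggests the candidate decomposition $\Om_{1,\e}:=\e^2\Om_1$ (manifestly closed, since $\Om_1$ is) together with $\Om_{2,\e}:=(1-\e^2)\alpha+\e(1-\e)\beta$. More generally, I would allow $\Om_{1,\e}$ of the form $s_\e\Om_1+\Psi_\e$ with $\Psi_\e$ another closed 4-form (for instance $\rho^*\omega_X$ for a 4-form on the base, or an exact correction $d\chi_\e$), and optimize the parameters to minimize the pointwise operator norm of the resulting $\Om_{2,\e}$.

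The inequality $-\om\wedge\om\wedge\Om_{2,\e}<|\om|^2\,vol_W$ translates, via the pointwise identity $\om\wedge\om\wedge\Om_{2,\e}=-\langle T_{2,\e}\om,\om\rangle_{g_\e}\,vol_W$, into the condition that the $g_\e$-self-adjoint endomorphism $T_{2,\e}(\om):=-\hstar_{g_\e}(\Om_{2,\e}\wedge\om)$ of $\Lambda^2T^*W$ has largest eigenvalue strictly less than $1$ at every point of $W$. Using the bidegree decomposition $\om=\om^{2,0}+\om^{1,1}+\om^{0,2}$ together with the horizontal and vertical self-dual/anti-self-dual splittings, one computes $T_{2,\e}$ block-by-block; its nontrivial blocks mix horizontal and vertical 2-form components with weights dictated by how $g_\e$ rescales the vertical directions. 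The delicate point, which I expect to be the main technical obstacle, is to balance the $\alpha$- and $\beta$-contributions in $\Om_{2,\e}$ so that every eigenvalue of $T_{2,\e}$ stays bounded away from $1$ uniformly in $\e\in(0,1)$, in spite of the $\e^{\pm 1}$ factors that $g_\e$ introduces in vertical directions; this is likely to necessitate the more general form of $\Om_{1,\e}$ mentioned above rather than the simplest choice $\e^2\Om_1$.
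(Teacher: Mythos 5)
Your proposal identifies the right general idea — subtract off a multiple of the closed $Spin(7)$ form $\Om = \Om_1$ and bound what remains — but it stops short of the proof, and the particular multiple you chose makes the remaining step harder than it needs to be. The paper takes $\Om_{1,\e}=\e\,\Om$ (not $\e^2\Om$), which gives
\[
\Om_{2,\e}=\Om_\e-\e\Om=(1-\e)\Om^{4,0}-\e(1-\e)\Om^{0,4}.
\]
Crucially, this $\Om_{2,\e}$ has components only in bidegrees $(4,0)$ and $(0,4)$. For a $(4,0)$ (resp.\ $(0,4)$) form, wedging against $\om\wedge\om$ picks out only the $\om^{0,2}\wedge\om^{0,2}$ (resp.\ $\om^{2,0}\wedge\om^{2,0}$) piece, which equals a difference of squared norms of (anti-)self-dual parts times the volume form; hence $|\om\wedge\om\wedge\Om^{4,0}|\le|\om|^2\,vol_W$ and likewise for $\Om^{0,4}$. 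Adding the two bounds with coefficients $(1-\e)$ and $\e(1-\e)$ gives $-\om\wedge\om\wedge\Om_{2,\e}\le(1-\e^2)|\om|^2\,vol_W<|\om|^2\,vol_W$ for $\e\in(0,1)$, and the proof is done.

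Your choice $\Om_{1,\e}=\e^2\Om$ yields $\Om_{2,\e}=(1-\e^2)\Om^{4,0}+\e(1-\e)\Om^{2,2}$, which now carries a $(2,2)$ component. Wedging $\Om^{2,2}$ against $\om\wedge\om$ mixes horizontal and vertical parts of $\om$ and its operator norm is not $\le 1$ (the Cayley form wedges against self-dual forms with eigenvalue $3$), so the simple componentwise bound no longer goes through; this is exactly the ``delicate point'' you flag but do not resolve. Also, the bigraded relations you extract from $d\Om_1=0$ are correct but are not used anywhere: the identity $\Om_\e=\e\Om+\Om_{2,\e}$ (or $\Om_\e=\e^2\Om+\dots$) is pure algebra and needs nothing about how $d$ decomposes — the closedness of $\Om_{1,\e}$ is immediate from closedness of $\Om$. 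In short: the genuine gap is the missing inequality, and it is cleanest to avoid it by choosing the scalar multiple of $\Om$ so that the $(2,2)$ component cancels exactly.
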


\begin{proof}
 We have
\begin{equation*}
   \begin{aligned}
    \Om_\e &=\Om^{4,0}+\e\,\Om^{2,2}+\e^2\Om^{0,4}\\%
                  &=\e\,\Om +\duzhky{(1-\e)\Om^{4,0}-\e(1-\e)\Om^{0,4}}\\%
                  &=\Om_{1, \e}+ \Om_{2, \e}.
   \end{aligned}
\end{equation*}
By assumption $\Om_{1, \e}= \e\,\Om$  is closed. Further, for any 2-form $\om$ we have%
\begin{equation*}
    \begin{aligned}
     -\om\wedge\om\wedge\Om^{4,0}&=-\om^{0,2}\wedge\om^{0,2}\wedge \Om^{4,0} \\%
                                                                  &= \duzhky{\bigl |\om_-^{0,2}\bigr |^2-\bigl |\om_+^{0,2}\bigr |^2}vol_W\le |\om |^2 vol_W
    \end{aligned}
\end{equation*}
and similarly $\om\wedge\om\wedge\Om^{0,4}\le |\om|^2 vol_W$. Combining these inequalities, we obtain~(\ref{Eq_Om2e}).
\end{proof}

\begin{cor}[{\cite[Thm.\,6.1.3]{Tian:00}}]
 Let $G$ be a compact Lie group and $\mathbb P \xrightarrow{\ \eta\ } W$ be a principal $G$-bundle. Then for any $\e\in (0, 1]$ there exists a natural compactification of the moduli space $\EuScript M_{asd}^\e(\mathbb P)$ of asd connections with respect to the form $\Om_\e$.\qed
\end{cor}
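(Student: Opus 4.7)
The plan is to reduce the statement directly to Tian's compactification theorem \cite[Thm.\,6.1.3]{Tian:00}, whose applicability is essentially the content of the preceding lemma. Tian's result produces a natural (Uhlenbeck-type) compactification of the moduli space of asd connections on a Riemannian eight-manifold equipped with a 4-form $\Omega'$ of comass 1, under the hypothesis that $\Omega'$ admits a decomposition into a closed 4-form plus a pointwise correction satisfying an inequality of exactly the shape $-\omega\wedge\omega\wedge(\text{correction})\le c\,|\omega|^2\,vol$ with $c<1$, for every 2-form $\omega$. This bound is precisely what is needed to run the energy estimate: integrating $\mathrm{tr}(F_A\wedge F_A)\wedge\Omega'$ against an asd connection and pairing with $\Omega'=\Omega'_1+\Omega'_2$ gives a topological term (from $\Omega'_1$ being closed) plus a perturbation controlled by $\Omega'_2$, and the constant being strictly less than $1$ absorbs the perturbation into the curvature term, yielding a uniform $L^2$-bound on $F_A$. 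Once this is in hand, the standard Uhlenbeck--Nakajima--Tian bubbling machinery produces the compactification via removable singularities and a stratification by Cayley cycles.

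First, I would record that the previous lemma provides the required decomposition: $\Omega_\e=\Omega_{1,\e}+\Omega_{2,\e}$ with $\Omega_{1,\e}=\e\,\Omega$ closed (because $W$ is a $Spin(7)$-manifold, so $d\Omega=0$), and with $\Omega_{2,\e}$ satisfying~\eqref{Eq_Om2e} with constant strictly less than $1$ at every point of $W$. Since $W$ is compact, this pointwise inequality is uniform with a constant bounded away from $1$. The form $\Omega_\e$ itself is of comass $1$ by construction.

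Next, I would invoke Tian's theorem with $\Omega'=\Omega_\e$. The hypotheses on the calibration form are verified by the lemma; compactness of $G$ and compactness of $W$ are included in our assumptions; and $\mathbb P$ is a general principal $G$-bundle as allowed by Tian. The conclusion of Tian's theorem then gives a compact space $\overlineshort{\EuScript M}_{asd}^\e(\mathbb P)$ containing $\EuScript M_{asd}^\e(\mathbb P)$ as an open subset, with boundary points described by pairs consisting of an asd connection on a lower-charge bundle together with a Cayley-type cycle carrying the lost energy.

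The substantive content of the corollary is entirely in the comass and almost-closedness check of the preceding lemma; the main potential obstacle is verifying that the constant in~\eqref{Eq_Om2e} is \emph{strictly} less than $1$ uniformly (not merely pointwise), but this is automatic on a compact $W$. Everything else is an application of \cite[Thm.\,6.1.3]{Tian:00}.
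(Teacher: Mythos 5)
Your proof is correct and takes exactly the approach the paper intends: the corollary is stated with \qed precisely because it is an immediate application of the preceding lemma (which supplies the decomposition $\Om_\e=\Om_{1,\e}+\Om_{2,\e}$ with $\Om_{1,\e}$ closed and $\Om_{2,\e}$ small in the required sense) together with Tian's compactification theorem \cite[Thm.\,6.1.3]{Tian:00}, which is what you spell out. One minor note: the uniformity concern you flag at the end is not actually an issue here, since the lemma's proof yields the explicit pointwise constant $1-\e^2<1$, which is already uniform over $W$ without invoking compactness; and for the endpoint $\e=1$ (included in the corollary but not in the lemma) the form $\Om_1=\Om$ is closed outright, so the decomposition is trivial.
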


Consider $\mathbb P$ as the fibre bundle over $X$ via the map $\tau\colon\mathbb P\xrightarrow{\ \eta\ }W\xrightarrow{\ \rho\ }X$. Then a connection $\phi$ on $\mathbb P\rightarrow X$ induces a connection $\varphi$ on $W\rightarrow X$. Indeed, think of a connection as a 1-form with values in the vertical bundle. Further, observe that $\mathcal V_\tau=\ker \eta_*\comp\rho_*=\eta_*^{-1}(\mathcal V_\rho)$. Then the connection $\varphi$ is determined via the requirement that the diagram%
\begin{displaymath}
 \begin{CD}
 \mathrm T\mathbb P  @> \eta_*> >  TW \\
 @V \phi VV                @VV\varphi V\\
 \mathcal V_\tau @> \eta_* > >  \mathcal V_\rho
 \end{CD}
\end{displaymath}
commutes. We assume a choice of connection $\phi$ inducing connection~(\ref{Eq_SplittingTW}) on $W$ is made.

\begin{rem}
 A connection $\phi$ as above does exist (but is not unique). Indeed, first notice that the space of all connections on $\mathbb P\rightarrow X$ inducing a given connection on $W\rightarrow X$ is convex. It is easy to check the existence of $\phi$ for trivial bundles. Then the existence of $\phi$ for nontrivial bundles can be obtained via gluing with the help of the partition of unity.

In the setup of Section~\ref{Subsect_DiffFormsOnFibreBundles}  $\phi$ was fixed via the lift of the $H$-action from $M$ to $P$ and the choice of a connection on the principal $H$-bundle $Q$. 
\end{rem}

Assume that for each $x\in X$ the moduli space $\mathcal M_{asd}(i_x^*\mathbb P)$ of asd-con\-nec\-tions on the fibre $W_x$ is nonsingular.  Denote by $\mathbb M_{asd}\rightarrow X$ the fibre bundle obtained by replacing $W_x$ by $\mathcal M_{asd}(i_x^*\mathbb P)$. Similarly, the fibre bundle $\mathcal A_{asd}(i_x^*\mathbb P)\rightarrow\mathcal M_{asd}(i_x^*\mathbb P)$ gives rise to the bundle $\mathbb A_{asd}\rightarrow\mathbb M_{asd}$ and the connection $\phi$ induces a connection on $\mathbb M_{asd}\rightarrow X$. Further, a hyperHermitian structure on $W_x$ induces a hyperHermitian structure on the corresponding fibre of $\mathbb M_{asd}$. In particular the vertical bundle of  $\mathbb M_{asd}$ is quasiquaternionic and there is also an induced isomorphism $\Gamma\colon\mathfrak{so}_+(\mathcal H_{\mathbb M_{asd}})\rightarrow \mathcal I(\mathcal V_{\mathbb M_{asd}})$.

Further, similarly as in the case of the spinor bundle we can write the asd equations with respect to the form $\Om_\e$ in the form%
\begin{equation}\label{Eq_ASDonCayley}
    \Pi_\e''\duzhky{F_{A_\e}^{2,0}+ F_{A_\e}^{0,2}}=0,\qquad
    \duzhky{F_{A_\e}^{1,1}}^+=0.
\end{equation}
The formal limiting form of system~(\ref{Eq_ASDonCayley}) as $\e\rightarrow 0$ is%
\begin{equation}\label{Eq_LimitingASDonCayley}
    \duzhky{F_{A_0}^{0,2}}^+=0,\qquad
    \duzhky{F_{A_0}^{1,1}}^+=0.
\end{equation}

 Let $\Gamma_0(\mathbb M_{asd})\subset \Gamma (\mathbb M_{asd})$ denote the subspace of all sections, which can be lifted to a section of $\mathbb A_{asd}$. 

\begin{thm}
 There exists a natural bijective correspondence between the moduli space of solutions to equations~(\ref{Eq_LimitingASDonCayley}) and the space $\mathcal H_0(\mathbb M_{asd})$ of  all harmonic spinors contained in $\Gamma_0(\mathbb M_{asd})$.\qed
\end{thm}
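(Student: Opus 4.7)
The plan is to parallel the proofs of Theorem~\ref{Thm_ASDarePSW} and Theorem~\ref{Thm_LimitingPSW_HarmSpinors}, generalising each step from the spinor-bundle setting to an arbitrary Cayley fibration. The two systems share the same formal structure: a fibrewise ASD condition cutting out the infinite-dimensional \hK reduction $\mathbb M_{asd}$, coupled with a mixed-type condition on the $(1,1)$-curvature component that will play the role of the generalised Dirac equation.

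First I would interpret the equation $(F_{A_0}^{0,2})^+=0$. Since the $(0,2)$-component of the curvature coincides with the fibrewise curvature $F_{A_0}|_{W_x}$, this condition means that $A_0|_{W_x}$ is anti-self-dual on each Cayley fibre. Hence $A_0$ determines a section $a\in\Gamma(\mathbb A_{asd})$, whose projection to the \hK quotient is a section $u\in\Gamma_0(\mathbb M_{asd})$. Writing $A_0=\hat a+\hat b$ with respect to the prescribed connection $\phi$ as in Section~\ref{Subsect_DiffFormsOnFibreBundles}, the Bismut--Lott formula of Proposition~\ref{Prop_CompOfCurvature} gives $F_{A_0}^{1,1}=\nabla^\varphi a+\nabla^a b$. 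The fibrewise isomorphism $\Gamma\colon\mathfrak{so}_+(\mathcal H_{\mathbb M_{asd}})\rightarrow \mathcal I(\mathcal V_{\mathbb M_{asd}})$ then lets one rewrite the projection of $F_{A_0}^{1,1}$ onto the $E^-$-summand of $\mathcal H_\rho^*\otimes\mathcal V_\rho$ as the generalised Dirac expression $\dirac_b a$, exactly as in Example~\ref{Ex_DiracOpOn1Forms}. Consequently, $(F_{A_0}^{1,1})^+=0$ becomes equivalent to $\dirac_b a=0$.

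Next I would apply the Cayley-fibration analogue of Proposition~\ref{Prop_HarmSpinorHKred}. Since the fibrewise moment map vanishes on $a$ and the based gauge group acts freely there, the tangent space of the configuration space splits orthogonally as $\EuScript H\oplus\mathcal K\oplus I_1\mathcal K\oplus I_2\mathcal K\oplus I_3\mathcal K$, and there is a unique $\mathcal L$-valued 1-form characterised by the condition $\hat a_*(\mathrm v)-\hat u_*(\mathrm v)=-K_{b(\mathrm v)}$ of~(\ref{Eq_DefinitionOf_b-Form}). Equivariance under $Sp_+(1)$ shows that $b$ descends to a 1-form on $X$ with values in $\BundleLieAlgG$. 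Combined with the first step, this identifies $u$ with a harmonic spinor for the generalised Dirac operator on $\mathbb M_{asd}$ introduced in Section~\ref{Sect_InstantonsAndCayleyFibr}. Conversely, a harmonic $v\in\mathcal H_0(\mathbb M_{asd})$ admits a lift $a\in\Gamma(\mathbb A_{asd})$ by the definition of $\Gamma_0$; the same algebraic recipe produces $b$, and $A_0=\hat a+\hat b$ then solves~(\ref{Eq_LimitingASDonCayley}). Two lifts differ by an element of the fibrewise gauge group $\Gamma(\mathbb G)$ acting via~(\ref{Eq_GaugeGpAction}), so the correspondence descends to moduli spaces.

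The main obstacle I anticipate is checking that the algebraic identification in the first step does yield precisely the intrinsic generalised Dirac operator of Section~\ref{Sect_InstantonsAndCayleyFibr}. In the spinor-bundle case the horizontal distribution is induced by the Levi-Civita connection on the $Spin(4)$-frame bundle, so the identification is nearly tautological. In the Cayley-fibration setting the horizontal bundle of~(\ref{Eq_SplittingTW}) is defined solely by the metric splitting, and the curvature term $\imath_\Phi$ in Proposition~\ref{Prop_CompOfCurvature} need not vanish. Absorbing this term into the prescribed choice of $\phi$ on $\mathbb P$ lifting the connection on $W\rightarrow X$, and verifying that the resulting $E^-$-projection is intrinsically built from the structural isomorphism $\gamma$, is where the geometry of the Cayley fibration genuinely enters the argument.
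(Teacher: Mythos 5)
Your proposal follows the same strategy the paper sketches: interpret $(F_{A_0}^{0,2})^+=0$ as the fibrewise anti-self-duality condition producing $a\in\Gamma(\mathbb A_{asd})$, identify $(F_{A_0}^{1,1})^+=0$ with the generalized Dirac equation via Proposition~\ref{Prop_CompOfCurvature} and the structural isomorphism, and then push the tangent-space decomposition from Proposition~\ref{Prop_HarmSpinorHKred} through to obtain the bijection. The paper's own proof is only a remark that ``the arguments in the proof of Theorem~\ref{Thm_LimitingPSW_HarmSpinors} apply'', so your elaboration is compatible with the intended argument.

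Two small inaccuracies in your final paragraph are worth flagging. First, the curvature term $\imath_\Phi a$ that you worry about appears, by \eqref{Eq_CurvatureComponent20}, only in $F_A^{2,0}$; the limiting system~\eqref{Eq_LimitingASDonCayley} involves only the $(0,2)$ and $(1,1)$ components, so this term never enters and does not need to be ``absorbed''. (It also does not vanish in the spinor-bundle case --- it is precisely the right-hand side of~\eqref{Eq_PSW_SecEq} --- so the contrast you draw between the two settings is not the right one.) Second, the sentence ``equivariance under $Sp_+(1)$ shows that $b$ descends'' is borrowed from Proposition~\ref{Prop_HarmSpinorHKred}, but in the Cayley-fibration setting there is no principal $Sp_+(1)$-bundle over $X$ of which $W$ is an associated bundle --- this is exactly why the paper says the system cannot be read as a \TP system. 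Instead, the defining identity~\eqref{Eq_DefinitionOf_b-Form} should be applied directly to the sections $a\in\Gamma(\mathbb A_{asd})$ and $u\in\Gamma_0(\mathbb M_{asd})$ and the connection $\phi$ over each point of $X$; freeness of the fibrewise based gauge action then gives $b\in\Om^1(X;\BundleLieAlgG)$ with no descent step to check. The genuinely nontrivial point you correctly isolate is that the $E^-$-projection coming from $\Lambda^2_+T^*W$ must coincide with the intrinsic generalized Dirac operator built from $\gamma$; that identification is exactly what the construction of $Q_\rho$ and the isomorphism $\Gamma$ on $\mathbb M_{asd}$ in Section~\ref{Sect_InstantonsAndCayleyFibr} is set up to provide.
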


The proof of the above theorem can be obtained by a suitable modification of the proof of Theorem~\ref{Thm_LimitingPSW_HarmSpinors}. The  difference is that we can not interpret equations~(\ref{Eq_ASDonCayley}) as a \TP system, but it is easy to check directly that the arguments in the proof of Theorem~\ref{Thm_LimitingPSW_HarmSpinors} apply to the above statement as well. We omit the details.


\section{Concluding remarks}\label{Sect_ConcludingRemarks}

Let $M$ and $M'$ be two \hK manifolds endowed with actions of $Sp(1)\rtimes\mathcal G$ and $Sp(1)\rtimes\mathcal G'$ respectively such that $M_0=M\hkred\mathcal G$ and $M_0'=M'\hkred\mathcal G'$ are isomorphic as \emph{hypercomplex} manifolds (the Riemannian metrics are less important for what follows). Assume we are in a favourable situation when the spaces of solutions to the perturbed \TP equations~(\ref{Eq_PertPSW}) with targets $M$ and $M'$ are good approximations (in a suitable sense) of $\mathcal H_0(\mathbb M_0)\cong\mathcal H_0(\mathbb M_0')$. Then the \TP theories with target spaces $M$ and $M'$ are essentially equivalent.

\medskip

Recall that the ADHM construction represents the moduli space $\mathcal M_{n,k}$ of framed $SU(n)$-instantons of charge $k$ on $\mathbb R^4$ as the finite dimensional \hK reduction. In other words, a natural candidate for $M'$ in the context of $Spin(7)$-instantons on $\mathbb W^+\rightarrow X$ is the vector space 
\begin{equation*}
   M'\cong u(k)\otimes_{\mathbb R} W\,\oplus\,\mathbb C^n\otimes E\otimes W,
\end{equation*}
where $E$ denote the standard complex representation of $U(k)=\mathcal G'$. Notice that if $k=1$ we essentially arrive at the classical Seiberg-Witten theory. On the other hand, if we put formally $n=0$, which corresponds to the choice of %
$\mathfrak u(k)\otimes\mathbb H$ as a target manifold, then we get equations~(\ref{Eq_4DHitchinEqns}), i.e. a four-dimensional analogue of
Hitchin's theory~\cite{Hitchin:87}. In general, the choice of $M'$ as above leads to a mixture of both theories.

The problem is that the \hK reduction $M_0'$ is \emph{not} smooth (it is the Uhlenbeck compactification $\bar {\mathcal M}_{n,k}$ of $\mathcal M_{n,k}$) so that Theorem~\ref{Thm_LimitingPSW_HarmSpinors} is not applicable. One can partially overcome this difficulty as follows. Suppose $X$ is a \kahler surface so that we can modify slightly the original \TP equations:
\begin{equation*}
   \begin{cases}
      \dirac_b u=0, \\
      F_b^+ +\nu(u)=\xi\,\om_X,
   \end{cases}
\end{equation*}
where $\xi$ is a central element in $\mathfrak g'=\mathfrak u(k)$. Arguing along similar lines as in Section~\ref{Subsec_PerturbedTPeqns}, we arrive at the space of harmonic spinors (in fact, (anti)holomorphic sections) with the target $\mathcal M(n,k)=M'\hkred_{\mu=\xi}\mathcal G'$, which is smooth. In fact, there is~\cite{Nakajima:99} a natural holomorphic morphism\footnote{this morphism represents $\mathcal M(n,k)$ as a series of blow-ups of $ \bar {\mathcal M}_{n,k}$} $\mathcal M(n,k)\rightarrow \bar {\mathcal M}_{n,k}$ and hence the corresponding map between the spaces of holomorphic sections.

\medskip

We note in passing that it is also interesting to study the \TP gauge theories based on analogues of the ADHM construction for other types of \hK four-manifolds like tori~\cite{DonaldsonKronheimer:90} or ALE spaces~\cite{KronheimerNakajima:90}. Similarly, it is well-known that the moduli space of monopoles on $\mathbb R^3$ (the Atiyah-Hitchin manifold) can be constructed as infinite dimensional \hK reduction in two different ways~\cite{Hitchin:83}.  The author intends to continue his studies in the above directions.

\bigskip

\textsc{Acknowledgements.} I thank T.Walpuski and anonymous referees for helpful comments on an earlier version of this paper.



\end{document}